\documentclass[11pt,leqno,twoside]{amsart}
\usepackage{amssymb,amsmath,amsthm,color}
\usepackage[T1]{fontenc}
\usepackage{indentfirst}
\usepackage{cite,enumitem,graphicx}
\usepackage[colorlinks=true,urlcolor=blue,
citecolor=red,linkcolor=blue,linktocpage,pdfpagelabels,
bookmarksnumbered,bookmarksopen]{hyperref}
\usepackage[english]{babel}
\usepackage[left=2.61cm,right=2.61cm,top=2.72cm,bottom=2.72cm]{geometry}
\usepackage[hyperpageref]{backref}

\makeatletter
\providecommand\@dotsep{5}
\def\listtodoname{List of Todos}
\def\listoftodos{\@starttoc{tdo}\listtodoname}
\makeatother

\numberwithin{equation}{section}
\newtheorem{theorem}{Theorem}[section]
\newtheorem{proposition}[theorem]{Proposition}
\newtheorem{definition}[theorem]{Definition}
\newtheorem{lemma}[theorem]{Lemma}

\newtheorem{corollary}[theorem]{Corollary}
\newtheorem{remark}{Remark}

\def\R{\mathbb{R}}

\def\R {{\rm I}\hskip -0.85mm{\rm R}}

\title[The breakdown of the maximum principle]
{Characterizing the breakdown of the maximum principle in a class of nonlocal elliptic problems}

\author[J. R. Santos Jr.]{Jo\~ao R. Santos J\'unior}
\author[S. Sastre-G\'omez]{Silvia Sastre-G\'omez}
\author[D. A. Souza]{Diego A. Souza}

\address[J. R. Santos Jr.]
{\newline\indent Faculdade de Matem\'atica
\newline\indent
Instituto de Ci\^{e}ncias Exatas e Naturais
\newline\indent
Universidade Federal do Par\'a
\newline\indent
Avenida Augusto corr\^{e}a 01, 66075-110, Bel\'em, PA, Brazil}
\email{\href{mailto: joaojunior@ufpa.br }{joaojunior@ufpa.br}}

\address[S. Sastre-G\'omez]
{\newline\indent Departamento de Ecuaciones Diferenciales y Análisis Numérico 
\newline\indent 
and
\newline\indent Instituto de Matemáticas de la Universidad de Sevilla,
\newline\indent
Universidad de Sevilla
\newline\indent
Campus Reina Mercedes, 41012, Seville, Spain}
\email{\href{mailto: ssastre@us.es}{ssastre@us.es}}

\address[D. A. Souza]
{\newline\indent Departamento de Ecuaciones Diferenciales y Análisis Numérico 
\newline\indent 
and
\newline\indent Instituto de Matemáticas de la Universidad de Sevilla,
\newline\indent
Universidad de Sevilla
\newline\indent
Campus Reina Mercedes, 41012, Seville, Spain}
\email{\href{mailto: desouza@us.es}{desouza@us.es}}

\thanks{Jo\~ao R. Santos was partially
supported by CNPq/Brazil 304340/2025-1, Brazil. S. Sastre-G\'omez is supported by PID2023-149509NB-I00 funded by MICIU and by the IMUS–María de Maeztu grant CEX2024-001517-M (Apoyo a Unidades de Excelencia María de Maeztu) funded by MICIU. Diego A. Souza
is supported by the project PID2024-158206NB-I00, funded by MICIU/AEI/10.13039/501100011033
and FEDER, UE; by the IMUS–María de Maeztu grant CEX2024-001517-M (Apoyo a Unidades de
Excelencia María de Maeztu), funded by MICIU/AEI/10.13039/501100011033; and by the Consolidación
Investigadora project CNS2024-154725, also funded by MICIU/AEI/10.13039/501100011033. }
\thanks{{\it Data availability statement}: Data sharing not applicable to this article as no datasets were generated or analysed during the current study.
{\it Conflict of interest statement}: The authors declare that they have no conflict of interest.}

\subjclass[2010]{ 
35J50,  	
35J57. 
	}
\keywords{Maximum principle, optimization problem, nonlocal elliptic equations}

\begin{document}
\begin{abstract}

In this paper, we consider a nonlocal elliptic equation involving a nonnegative parameter $a$ that quantifies the strength of the nonlocal term, posed in a smooth bounded domain with homogeneous Dirichlet boundary conditions and a given nonnegative source term $f$.
This class of problems arises in several applications, including the modeling of thermistor behaviour and population dynamics with crowding effects. We focus on the breakdown of the maximum principle as the parameter $a$ increases.
We characterize the two types 
of degeneracy that may occur, possibly simultaneously, at the critical parameter \(a^*\): the solution
may vanish at an interior point of the domain, possibly forming a dead-core region, 
or its normal derivative may vanish on the boundary. 
In particular, we obtain sufficient conditions 
for the emergence of dead-core solutions, 
thereby answering an open question raised in the recent literature. Several one- and two-dimensional examples are constructed 
to illustrate each possible scenario. 
Additionally, we study an optimization problem 
motivated by biological considerations, 
where the goal is to maximize the critical threshold \(a^*\) 
over an admissible class of resource distributions. 
For sources with prescribed total mass, a uniform upper bound, and an interior-contact condition, we show that the critical threshold is unbounded and that the corresponding total population can be arbitrarily small. We also establish the existence of a maximizer in a class with a prescribed positive lower bound on the total population, a uniform bound on the sources, and the same interior-contact condition.

\end{abstract}

\maketitle


\bigskip

\begin{center}
\begin{minipage}{12cm}
\tableofcontents
\end{minipage}
\end{center}

\bigskip

\section{Introduction}

Elliptic equations involving nonlocal terms of integral type 
have attracted considerable attention in recent decades, 
due both to their intrinsic mathematical interest 
and to their relevance in a variety of applications. 

Let \(\Omega \subset \mathbb{R}^N\) be a bounded smooth domain, denote by $\eta(x)$ the outward unit normal vector at a point $x\in \partial\Omega$ and by $\partial_\eta$ the outward normal derivative. Problems of the form
\begin{equation}
\left\{
\begin{array}{lll}
-\Delta u + a \displaystyle\int_\Omega u \,dy = f &\text{in}& \Omega,\\[1.2em]
u = 0 &\text{on} &\partial\Omega,
\end{array}
\right.
\end{equation}
where \(a\in \mathbb{R}\) is a parameter and \(f\) is a given function, 
 arise in contexts ranging from microsensor thermistor modelling 
to population dynamics. 
In the thermistor setting, as introduced by Allegretto et al. \cite{Allegretto1994} 
and further developed in \cite{AllegrettoBarabanova}, 
the function \(u\) represents the temperature distribution, 
the integral term corresponds to heat loss from the device surface 
to the surrounding gas, 
and the parameter \(a\) is related to gas pressure and other physical factors. 
The same equation has been employed to model other phenomena, 
including the steady-state behaviour of electric wires 
under Neumann boundary conditions \cite{Chafee1981} 
and various nonlocal diffusion processes.

In the biological interpretation, 
which is the main motivation for the present work, 
\(u(x)\) denotes the population density of a species at a point \(x\) 
in a habitat \(\Omega\). 
The Dirichlet boundary condition \(u=0\) on \(\partial\Omega\) 
means that the region outside \(\Omega\) is hostile, 
so that individuals cannot survive at the boundary. 
The nonlocal term  
represents a crowding effect: 
the total population influences the local dynamics 
through the parameter \(a\), 
which measures the intensity of this global interaction. 
This type of modelling, 
where the spatial distribution is affected by the total population, 
is natural in ecological systems 
and has been discussed in the works of Chipot and Rodrigues \cite{ChipotRodrigues1992} 
and Furter and Grinfeld \cite{FurterGrinfeld1989}. Further developments concerning positivity, maximum principles, spectral properties, and the sign of solutions for this class of problems and related nonlocal elliptic operators can be found in \cite{FreitasSweers,AlvesCorreaSantosJunior,BahiaPimentaSantos,CintraAnjos}. Existence and multiplicity results for semilinear and quasilinear problems, systems, singular equations, and equations involving subcritical, critical, and supercritical nonlinearities have been obtained in \cite{CabadaCorrea,CorreaLimaLima,dosSantosLimaLima,dosSantosLimaLimaStrongSingularity,deLimaNobregaTavares,GuefaifiaBoulaaras,dosSantosCorreaSilva}; see also the recent overview \cite{CorreaLimaLimaOverview}. Sharp estimates and saturation phenomena for a related nonlocal eigenvalue problem were investigated in \cite{BrandoliniFreitasNitschTrombetti}, while an earlier integro-differential formulation in the ordinary differential setting can be found in \cite{Catchpole}.

A fundamental question for this class of problems 
is whether the solution \(u_a\) remains nonnegative 
for nonnegative source terms \(f\), 
i.e., whether a maximum principle holds. 
The first systematic study in this direction 
was carried out by Allegretto and Barabanova \cite{AllegrettoBarabanova}. 
In that work, the authors proved the existence 
of a critical parameter \(a_0 > 0\) 
such that for \(0\le a < a_0\), 
the solution is positive in \(\Omega\) 
and has negative outward normal derivative on \(\partial\Omega\). 
They also provided an explicit one-dimensional example 
with \(f(x)=\sin(\pi x)\) on \(\Omega=(0,1)\), 
showing that for sufficiently large \(a\), 
the solution changes sign near the endpoints. 
Based on this observation, 
they conjectured that in higher dimensions, 
for large values of the parameter, 
the solution should also change sign. 
However, the nonconstructive nature of the methods employed in \cite{AllegrettoBarabanova} 
prevented a complete answer to this question.

The conjecture of Allegretto and Barabanova 
was recently resolved by Bahia, Pimenta and Santos Júnior \cite{BahiaPimentaSantos}. 
In that work, the authors considered the problem 
with \(a\ge 0\) and \(f\) bounded, nonnegative, and nonconstant, 
and established the existence of a critical parameter 
\(a^*(f)>0\), defined by
\[
a^*(f):=\sup\{a\ge 0 : u_a>0 \text{ in } \Omega 
\text{ and } \partial_\eta u_a<0 \text{ on } \partial\Omega\},
\]
where $\eta$ denotes the outward unit normal vector.
They proved that:
\begin{itemize}
\item for \(0\le a < a^*\), the maximum principle holds, 
with \(u_a>0\) in \(\Omega\) and \(\partial_\eta u_a<0\) on \(\partial\Omega\);
\item at \(a=a^*\), the solution remains nonnegative 
but degenerates in one of two ways: 
either there exists an interior point \(x_0\in\Omega\) 
such that \(u_{a^*}(x_0)=0\), 
or there exists a boundary point \(x_1\in\partial\Omega\) 
such that \(\partial_\eta u_{a^*}(x_1)=0\);
\item for \(a>a^*\), the solution changes sign in \(\Omega\).
\end{itemize}
This result provided a complete positive answer 
to the Allegretto--Barabanova conjecture in arbitrary dimension. 
In particular, it revealed the existence of a critical threshold \(a^*\), 
below which the model produces a strictly positive population profile. Above this threshold, the solution changes sign and therefore no longer represents a physically admissible population density. 
From a biological standpoint, 
the parameter \(a^*\) represents the threshold intensity of the crowding effect at which hostile zones first emerge. Thus, for \(a<a^*\), the population can withstand the effects of crowding without the formation of hostile zones, while \(a^*\) marks the onset of this phenomenon. Understanding the precise nature of the degeneracy at \(a=a^*\),
whether it occurs in the interior or on the boundary,
is therefore of both mathematical and ecological significance.

However, the work of Bahia, Pimenta and Santos Júnior \cite{BahiaPimentaSantos} 
left an important question open. 
While the authors characterized the critical parameter 
and the two possible types of maximum principle breakdown, 
they did not provide sufficient conditions on \(f\) 
that would guarantee the occurrence of interior degeneracy, 
i.e., the formation of a \emph{dead core} or \emph{dead zone}. 
A dead core is an open set \(\Omega_*\subset\subset \Omega\) 
where the solution vanishes identically, 
representing a hostile zone within the habitat 
where the population is extinct. 
This phenomenon is of particular biological interest, 
as it corresponds to the emergence of uninhabitable regions 
inside the domain, 
surrounded by areas where the population may still survive. 
The existence of dead-core solutions 
was explicitly posed as an open problem 
in Section~5.1 of \cite{BahiaPimentaSantos}.

The present paper addresses this open question. 
Our main result, stated in Theorem \ref{teo:interior}, 
gives a sharp characterization of the two types of degeneracy 
at the critical level in terms of the ratio \(u_f/e\), 
where \(e\) is the unique solution of 
\(-\Delta e=1\) in \(\Omega\), \(e=0\) on \(\partial\Omega\), 
and \(u_f\) is the solution of the Dirichlet problem 
\(-\Delta u_f=f\). 
Specifically, we prove that interior degeneracy 
(i.e., \(u_{a^*}(x_0)=0\) for some \(x_0\in\Omega\)) 
occurs if and only if the infimum of \(u_f/e\) over \(\Omega\) 
is attained at an interior point. 
Moreover, the critical parameter admits the explicit formula
\[
a^*(f)=\frac{\displaystyle\lambda^*(f)}{\displaystyle\int_\Omega f e\,dy 
- \lambda^*(f)\int_\Omega e\,dy},
\]
where 
\(\lambda^*(f):=\inf_{\Omega} u_f/e\). 
This characterization provides a direct formula for the critical parameter in terms of the source and the associated local solution.

Moreover, in Corollary \ref{suficiente} we establish a sufficient condition 
for the existence of dead-core solutions: 
if there exists an open set \(\Omega_0\subset\Omega\) 
where the ratio \(u_f/e\) is constant and attains its minimum, 
then \(\Omega_0\) is a dead zone of \(u_{a^*}\). 
This provides a concrete criterion for the emergence of dead cores in this class of problems.

The power of our characterization is demonstrated 
through a series of explicit examples in both one and two dimensions. 
In Section \ref{exam}, we construct four one-dimensional examples 
illustrating each possible scenario: 
(i) interior and boundary degeneracy simultaneously; 
(ii) boundary degeneracy only; 
(iii) interior degeneracy only; 
and (iv) the emergence of a dead zone in the interior. 
We also provide a two-dimensional example on the unit ball, 
showing that dead-core solutions occur in higher dimensions as well. 
These examples resolve the open issue raised in \cite{BahiaPimentaSantos} 
regarding the existence of functions \(f\) 
for which the maximum principle breaks down 
through the formation of dead zones.

A key technical innovation of our work 
is the representation formula for the solution \(u_a\) 
given in Lemma~\ref{ok}:
\[
u_a = u_f - \lambda(a,f,\Omega)e,
\]
where
\[
\lambda(a,f,\Omega):=\frac{\displaystyle a\int_\Omega f e\,dy}{\displaystyle 1+a\int_\Omega e\,dy}.
\]
This representation, which differs from the one used in \cite{BahiaPimentaSantos}, 
allows us to compare \(u_a\) with the difference \(u_f-\lambda e\) 
and to relate the degeneracy of \(u_a\) at the critical level 
to the attainment of the infimum of the ratio \(u_f/e\). 
The technical difficulties involved in this analysis 
include the delicate interplay between the nonlocal term 
and the elliptic operator, 
as well as the need to handle functions \(f\) 
that are only in \(L^\infty(\Omega)\) 
and not necessarily smooth.

In addition to the characterization 
of the maximum principle breakdown, 
we study an optimization problem 
motivated by the biological interpretation of the model. 
In a conservation context, 
it is natural to ask whether, 
within an admissible class of resource distributions \(f\), 
one can maximize the critical threshold \(a^*(f)\). 
Biologically, a larger value of \(a^*\) 
means that the population can tolerate a higher crowding effect 
before a hostile region appears, 
which would favour the persistence of the species. 
More precisely, in Section~5 we consider 
the variational problem
\[
\sup_{f\in \mathcal{A}_{\delta,C,M}} a^*(f),
\]
where \(\mathcal{A}_{\delta,C,M}\) is the class of functions \(f\) 
satisfying \(0\le f\le M\), \(\int_\Omega f\,dy=C\), 
and the interior-contact condition 
(ensuring that the dead-core, if it occurs, 
does so away from the boundary). 
We investigate, in Theorem \ref{ttteo}, 
the case where we have a prescribed total mass condition 
\(\int_\Omega f\,dy=C\). 
We show that, even under the mass constraint, the uniform bound, and the interior-contact condition, 
the supremum of \(a^*(f)\) over the admissible class is infinite. 
This indicates that the total mass constraint 
is not sufficient to prevent degeneracy phenomena, 
and that the spatial distribution of resources 
plays a crucial role in determining the critical threshold. {\color{black}That is why in Section \ref{Prtotpop} we replace the total mass condition by a total population constraint and we prove the existence of at least one maximizer of the supremum of $a^*(f)$.}

We also prove in Theorem \ref{tteo} 
that this supremum is attained, 
i.e., there exists an optimal resource distribution 
maximizing the critical threshold. 
This result has a clear biological interpretation: 
under a prescribed minimum viable population 
and a bound on available resources, 
there is an optimal resource profile 
that maximizes the resilience of the species 
to crowding effects.

The paper is organized as follows. 
In Section \ref{prel}, we recall the preliminary results 
needed for our analysis, 
including the definition of the critical parameter 
and the basic representation formula for the solution. 
In Section \ref{charact}, we establish our main characterization theorem 
for the breakdown of the maximum principle 
and provide an explicit formula for the critical parameter. 
Section \ref{exam} is devoted to explicit examples 
in one and two dimensions, 
illustrating each possible scenario. 
Finally, in Section \ref{otimiz}, 
we study the optimization problem involving \(a^*\), 
first under a prescribed total mass constraint 
and then under a prescribed total population constraint, 
and we discuss the biological implications of our results.


\section{Preliminary Results}\label{prel}

Let \(\Omega \subset \R^N\) be a smooth bounded domain. We consider the nonlocal problem
\begin{equation}\label{eq:P01}
\left\{
\begin{array}{lll}
-\Delta u + a \displaystyle\int_\Omega u \,dy = f &\text{in}& \Omega,\\[1.2em]
u = 0 &\text{on} &\partial\Omega,
\end{array}
\right.
\end{equation}
with \(a \in \R\) and \(f \in L^\infty(\Omega)\), \(f \geq 0\) and non-constant.

\medskip

Define the \textit{torsion function} \(e\) as the unique solution of
\begin{equation}\label{solut}
-\Delta e = 1 \quad \text{in } \Omega,\qquad e = 0 \quad \text{on } \partial\Omega.
\end{equation}
It is well known that \(e > 0\) in \(\Omega\) and \(\partial_\eta e < 0\) on \(\partial\Omega\).

\medskip

For a given \(f\in L^\infty(\Omega)\), let $u_f\in C^{1, \alpha}(\overline{\Omega})$, for some $\alpha\in (0, 1)$, denote the unique solution of the Dirichlet problem
\begin{equation}\label{solu}
-\Delta u_f = f \quad \text{in } \Omega,\qquad u_f = 0 \quad \text{on } \partial\Omega.
\end{equation}
The solution operator \(f \mapsto u_f\) is linear, continuous and compact from \(L^\infty(\Omega)\) into \(C(\overline{\Omega})\).

\begin{proposition}
\label{prop:sol}
For every $$\displaystyle a \neq -\frac{1}{\displaystyle\int_\Omega e(y)\,dy},$$ the unique solution \(u_a\) of \eqref{eq:P01} is given by
\begin{equation}\label{for1}
u_a = \frac{\displaystyle\int_{\Omega}fe dy}{\displaystyle\int_{\Omega}e dy\left(1+a\int_{\Omega}e dy\right)}e+P_{X}u_f, \qquad x \in \Omega,
\end{equation}
where $P_Xu_f$ is the orthogonal projection of $u_f$ onto
$$X=\left\{u\in H_{0}^1(\Omega): \displaystyle\int_{\Omega}u dy=0\right\}$$
with respect to the inner product $(u,v)_{H_0^1}:=\int_\Omega\nabla u\cdot\nabla v\,dy$.
\end{proposition}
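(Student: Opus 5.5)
Since the nonlocal term $a\int_\Omega u\,dy$ is a constant, any solution of \eqref{eq:P01} has the form $u = u_f - a\big(\int_\Omega u\,dy\big) e$. Uniqueness therefore reduces to a scalar consistency condition. Write $m=\int_\Omega u\,dy$; integrating the identity $u=u_f-ame$ gives $m(1+a\int_\Omega e\,dy)=\int_\Omega u_f\,dy$. When $a\neq -1/\int_\Omega e\,dy$ this determines $m$ uniquely, and hence $u$.

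For existence, define $m$ by that formula and set $u=u_f-ame$. Then $u$ is a solution, since $-\Delta u = f - am = f - a\int_\Omega u\,dy$ and $u=0$ on $\partial\Omega$. This gives
\[
u_a=u_f-\frac{a\int_\Omega u_f\,dy}{1+a\int_\Omega e\,dy}\,e .
\]
Integrating by parts twice (both $u_f$ and $e$ vanish on $\partial\Omega$) yields $\int_\Omega u_f\,dy=\int_\Omega u_f(-\Delta e)\,dy=\int_\Omega f e\,dy$. We will use this identity repeatedly.

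The remaining task is to match this expression with \eqref{for1}. First I will identify the projection. The $H_0^1$-orthogonal complement of $X$ is spanned by $e$, because $(u,e)_{H_0^1}=\int_\Omega \nabla u\cdot\nabla e\,dy=\int_\Omega u\,dy$. Hence
\[
P_Xu_f=u_f-\frac{(u_f,e)_{H_0^1}}{(e,e)_{H_0^1}}\,e=u_f-\frac{\int_\Omega fe\,dy}{\int_\Omega e\,dy}\,e,
\]
using $(e,e)_{H_0^1}=\int_\Omega e\,dy$. Substituting this, the right-hand side of \eqref{for1} becomes $u_f+c\,e$, where $c$ is a combination of $\int_\Omega fe\,dy$, $\int_\Omega e\,dy$ and $a$. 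An algebraic simplification should then give $c=-a\int_\Omega fe\,dy/(1+a\int_\Omega e\,dy)$, which is the coefficient obtained above.

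The main obstacle is this coefficient bookkeeping. I expect it to reduce to the identity
\[
\frac{1}{E(1+aE)}-\frac1E=-\frac{a}{1+aE},\qquad E=\int_\Omega e\,dy,
\]
which is elementary. One should also note that $u_f\in H_0^1(\Omega)$ by elliptic regularity, so the projection is well defined. The nondegeneracy $a\neq-1/E$ is exactly what the scalar equation for $m$ requires.
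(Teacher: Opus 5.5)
Your proposal is correct and follows essentially the same route as the paper: write $u_a=u_f-a\big(\int_\Omega u_a\,dy\big)e$, solve the resulting scalar equation for $\int_\Omega u_a\,dy$ using $\int_\Omega u_f\,dy=\int_\Omega fe\,dy$, and substitute the decomposition $u_f=P_Xu_f+\frac{\int_\Omega fe\,dy}{\int_\Omega e\,dy}\,e$, which comes from $X^\perp=\operatorname{span}\{e\}$. Your explicit existence and uniqueness check through the scalar consistency condition is a small addition that the paper leaves implicit, and your coefficient identity is correct.
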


\begin{proof}
Set $I_a:=\int_\Omega u_a\,dy$. Since $-\Delta e=1$, the equation in \eqref{eq:P01} gives
\[
u_a=u_f-aI_a e.
\]
Integrating this identity and using $\int_\Omega u_f\,dy=\int_\Omega fe\,dy$, we obtain
\[
I_a=\frac{\int_\Omega fe\,dy}{1+a\int_\Omega e\,dy}.
\]
Moreover, $X$ is the orthogonal complement of $\operatorname{span}\{e\}$ in $H_0^1(\Omega)$, because
\[
(u,e)_{H_0^1}=\int_\Omega u\,dy.
\]
Thus
\[
u_f=P_Xu_f+\frac{\int_\Omega fe\,dy}{\int_\Omega e\,dy}e,
\]
and substitution in $u_a=u_f-aI_ae$ yields \eqref{for1}.
\end{proof}
\begin{remark}
    From identity \eqref{for1}, we directly deduce the following implicit representation of the parameter $a$
\[
a= \frac{\displaystyle1}{\displaystyle\int_\Omega e(x)dy}\left(\frac{\displaystyle\int_{\Omega}f(x)e(x) dy}{\displaystyle\int_\Omega u_a(x) dy}-1\right).
\]
\end{remark}

From now on we restrict ourselves to the case \(a \geq 0\) and \(f\) nonnegative.

\begin{definition}[Critical parameter]\label{Defi1}
For \(f \in L^\infty(\Omega)\), \(f \geq 0\), we define
\[
a^*(f) = \sup\bigl\{ a \geq 0 : u_a > 0 \text{ in } \Omega \text{ and } \partial_\eta u_a < 0 \text{ on } \partial\Omega \bigr\}.
\]
If the supremum is unbounded we write \(a^\ast(f)=+\infty\).
\end{definition}

\begin{theorem}[Allegretto–Barabanova \cite{AllegrettoBarabanova}]
\label{thm:eta1}
There exists a constant \(\eta_1 > 0\), depending only on \(\Omega\) and on the ellipticity constants, such that for every \(0 \leq a < \eta_1\) and every nontrivial \(f \geq 0\), the solution \(u_a\) of \eqref{eq:P} is positive in \(\Omega\) and satisfies \(\partial_\eta u_a < 0\) on \(\partial\Omega\).
\end{theorem}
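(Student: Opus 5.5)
The plan is to prove the statement with the representation $u_a=u_f-aI_a e$ from the proof of Proposition~\ref{prop:sol}. There $I_a=\int_\Omega u_a\,dy$, so $aI_a=\lambda(a,f,\Omega)$ and
\[
u_a=u_f-\frac{a\int_\Omega fe\,dy}{1+a\int_\Omega e\,dy}\,e .
\]
Positivity and the sign of the normal derivative both follow if $u_f$ dominates a multiple of $e$ strictly larger than $\lambda(a,f,\Omega)$. The whole argument therefore rests on one uniform lower bound:
\begin{equation*}
u_f(x)\ \ge\ c_\Omega \Bigl(\int_\Omega f e\,dy\Bigr)\, e(x)\qquad\text{for all }x\in\Omega \text{ and all } f\ge 0,
\end{equation*}
with $c_\Omega>0$ depending only on $\Omega$.

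\textbf{Step 1: the lower bound.} Write $u_f(x)=\int_\Omega G(x,y)f(y)\,dy$, where $G$ is the Dirichlet Green function of $-\Delta$ in $\Omega$. I will invoke the classical two-sided estimate for the Green function of a smooth (indeed $C^{1,1}$) bounded domain, due to Zhao and Grüter--Widman:
\[
G(x,y)\ \ge\ c\,\delta(x)\,\delta(y),\qquad \delta(x)=\operatorname{dist}(x,\partial\Omega).
\]
Next I use that $e$ is comparable to $\delta$. The bound $e\le C\delta$ holds because $e\in C^1(\overline\Omega)$ and $e=0$ on $\partial\Omega$. The bound $e\ge \delta/C$ follows from the Hopf lemma, since $\partial_\eta e<0$ on the compact set $\partial\Omega$, combined with interior positivity of $e$. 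Together these give $G(x,y)\ge c_\Omega\, e(x)e(y)$. Integrating this against $f\ge0$ yields the lower bound above.

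This Green-function estimate is the main obstacle. If I want to avoid citing it, I would try a direct argument. Away from the boundary, the interior Harnack inequality compares $G(\cdot,y)$ with $e$. Near the boundary, one applies the Hopf lemma to $G(\cdot,y)$ for $y$ in a fixed compact set, and uses the symmetry $G(x,y)=G(y,x)$ to handle $y$ close to the boundary. Either way the constant depends only on $\Omega$.

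\textbf{Step 2: conclusion.} Set $\eta_1:=c_\Omega$. Since $f\ge0$ is nontrivial and $e>0$, we have $\int_\Omega fe\,dy>0$. For $0\le a<\eta_1$,
\[
u_a\ \ge\ \Bigl(c_\Omega-\frac{a}{1+a\int_\Omega e\,dy}\Bigr)\Bigl(\int_\Omega fe\,dy\Bigr)e\ \ge\ (c_\Omega-a)\Bigl(\int_\Omega fe\,dy\Bigr)e=:\kappa\, e,
\]
with $\kappa>0$. Hence $u_a>0$ in $\Omega$.

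Both $u_a$ and $e$ belong to $C^{1}(\overline\Omega)$, because $f\in L^\infty$. Moreover $u_a-\kappa e\ge0$ in $\Omega$ and $u_a-\kappa e$ vanishes on $\partial\Omega$. Taking one-sided difference quotients along the inward normal therefore gives $\partial_\eta u_a\le \kappa\,\partial_\eta e<0$ on $\partial\Omega$. This finishes the proof, with $\eta_1$ depending only on $\Omega$ through the Green-function constant.
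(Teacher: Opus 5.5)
The paper gives no proof of this statement. It is quoted from \cite{AllegrettoBarabanova}, whose method the introduction calls nonconstructive, and it is used only for the uniform bound $a^*(f)\ge\eta_1$ in Remark~\ref{obs}(i). Your proposal is correct and self-contained, modulo one classical estimate.

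Your identity $u_a=u_f-\lambda(a,f,\Omega)e$ is Lemma~\ref{ok}. The kernel bound $G(x,y)\ge c\,\delta(x)\delta(y)$ does hold for bounded $C^{1,1}$ domains in every dimension. For $N\ge3$ it follows from Zhao's two-sided estimate because $|x-y|\le\operatorname{diam}\Omega$; for $N=2$ it follows from the logarithmic analogue; for $N=1$ it is explicit. To turn it into $G\ge c_\Omega\,e(x)e(y)$ you only need $e\le C\delta$, so the Hopf direction $e\ge\delta/C$ is superfluous there. Step~2 is fine, including $\partial_\eta u_a\le\kappa\,\partial_\eta e<0$.

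Compared with the cited result, your route buys two things. First, it gives an explicit threshold: $\eta_1=c_\Omega$, or slightly better $c_\Omega/(1-c_\Omega\int_\Omega e\,dy)$, since you only need $a/(1+a\int_\Omega e\,dy)<c_\Omega$. Second, it fits the paper's own machinery. Your key inequality says exactly that $\lambda^*(f)\ge c_\Omega\int_\Omega fe\,dy$. The right-hand side of \eqref{formula1} is increasing in $\lambda^*$, so inserting this bound gives $a^*(f)\ge c_\Omega$ directly for every $f\in\mathcal A$.

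One caution concerns your fallback for avoiding the citation. Hopf's lemma for $G(\cdot,y)$ with $y$ in a fixed compact set, together with symmetry, only controls pairs $(x,y)$ in which at least one point stays in that compact set. It says nothing when $x$ and $y$ both approach $\partial\Omega$, possibly close to each other, and that is exactly the hard regime of the pointwise kernel bound.

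The standard way around this is the Brezis--Cabr\'e lemma, which estimates $u_f$ rather than $G$:
\begin{enumerate}
\item Fix $\omega\subset\subset\Omega$. For $x\in\overline\omega$ you do have $G(x,\cdot)\ge c\,\delta$, by Hopf and compactness, so $\min_{\overline\omega}u_f\ge c\int_\Omega f\delta\,dy$.
\item Let $w$ be harmonic in $\Omega\setminus\overline\omega$ with $w=1$ on $\partial\omega$ and $w=0$ on $\partial\Omega$; Hopf gives $w\ge c\,\delta$. In $\Omega\setminus\overline\omega$ the superharmonic function $u_f$ dominates $(\min_{\overline\omega}u_f)\,w$.
\item Combined with $\delta\ge e/C$, this yields your lower bound $u_f\ge c_\Omega\bigl(\int_\Omega fe\,dy\bigr)e$ without any two-point Green function estimate.
\end{enumerate}
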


\begin{remark}\label{obs}
It is important to point out some consequences of previous results:

\begin{itemize}
\item [(i)] The authors of \cite{BahiaPimentaSantos} proved that $a^\ast(f)$ is a positive real number when $f$ satisfies the conditions of Definition \ref{Defi1} and is nonconstant. Moreover, as a consequence of Theorem \ref{thm:eta1}, \(a^*(f)\) is uniformly bounded from below for all nonnegative, nonconstant \(f\in L^\infty(\Omega)\); that is, there exists a positive constant $\eta_1$ such that
\[
a^*(f) \geq \eta_1 \quad \text{for all } f \geq 0,\ \mbox{$f$ non-constant}.
\]

\item [(ii)] When \(f\) is constant, the solution (and its normal derivative) never touches zero in $\Omega$ (on $\partial\Omega$, respectively). In fact, if \(f \equiv c > 0\) is constant, then \(u_f = c\,e\) and, as demonstrated in \cite{BahiaPimentaSantos}, from \eqref{for1}, we obtain 
$$
\displaystyle u_a = \frac{c}{\displaystyle1 + a\int_\Omega e(x) dy}\,e.
$$
Since \(e > 0\) in \(\Omega\), it follows that \(u_a > 0\) for every \(a \geq 0\). Hence \(a^*(c) = +\infty\).

\item [(iii)] 
    Notice that the critical parameter $a^*(f)$ is homogeneous of degree zero, i.e. $a^*(tf) = a^*(f)$ for any $t > 0$. 
    Indeed, let $u_a(f)$ denote the unique solution to \eqref{eq:P01} for a given parameter $a$ and source $f$. By linearity, $u_a(tf) = t u_a(f)$. Since $t > 0$, the sign and boundary behavior of the solution are strictly preserved. Therefore:
    \begin{itemize}
        \item If $a < a^*(f)$, then $u_a(tf) > 0$ in $\Omega$ with $\partial_\eta u_a(tf) < 0$ on $\partial\Omega$.
        \item If $a > a^*(f)$, then $u_a(tf)$ changes sign in $\Omega$.
        \item If $a = a^*(f)$, then $u_{a^*}(tf) \ge 0$ in $\Omega$ and either $u_{a^\ast}(tf) $ vanishes at some interior point of $\Omega$ or its normal derivative $\partial_\eta u_{a^\ast}(tf) $ vanishes at some boundary point of $\partial\Omega$.
    \end{itemize}
    Since $a^*(f)$ is the exact transition threshold for the solutions associated with the scaled source $tf$, we conclude that $a^*(tf) = a^*(f)$.
\end{itemize}
\end{remark}

\section{Characterization of Maximum Principle Breakdown}\label{charact}

In what follows, we denote

$$
\mathcal{B}=\{f \in L^\infty(\Omega) : f \geq 0 \}
$$
and 
$$
\mathcal{A} = \{ f \in \mathcal{B}: f \text{ non-constant} \}.
$$

\begin{lemma}\label{ok}
 Let $f\in\mathcal{A}$, $e$ solution of \eqref{solut} and $u_f$ solution of \eqref{solu}. Then,
\begin{equation}\label{chara}
    u_a(x) = u_f(x) - \lambda(a, f, \Omega)\, e(x), \quad \forall x\in \overline{\Omega},
\end{equation}
where $u_a$ is the solution of \eqref{eq:P01} and
\begin{equation}\label{pulo}
\lambda(a, f, \Omega):=\frac{\displaystyle a\int_\Omega u_f dy}{\displaystyle1 + a\int_\Omega e dy}=\frac{\displaystyle a\int_\Omega fe dy}{\displaystyle1 + a\int_\Omega e dy}.
\end{equation}
\end{lemma}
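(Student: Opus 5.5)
The natural route is to reduce the nonlocal problem to two local Dirichlet problems by linearity, exactly as in the proof of Proposition \ref{prop:sol}. Since $a\ge 0$, the exceptional value $a=-1/\int_\Omega e\,dy$ is excluded, so \eqref{eq:P01} has a unique solution $u_a$. Write $I_a:=\int_\Omega u_a\,dy$, which is a real number. Then $u_a$ solves the local problem $-\Delta u_a = f - aI_a$ in $\Omega$, $u_a=0$ on $\partial\Omega$. The function $u_f - aI_a e$ solves the same local problem, because $-\Delta(u_f-aI_ae)=f-aI_a$ and it vanishes on $\partial\Omega$. Uniqueness for the Dirichlet problem then gives $u_a=u_f-aI_a e$. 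Since both $u_f$ and $e$ belong to $C^{1,\alpha}(\overline\Omega)$, this identity holds pointwise on all of $\overline\Omega$, not merely almost everywhere.

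It remains to compute $I_a$. Integrating the identity over $\Omega$ gives $I_a=\int_\Omega u_f\,dy - aI_a\int_\Omega e\,dy$. Because $1+a\int_\Omega e\,dy>0$, we can solve for $I_a$:
\[
I_a=\frac{\int_\Omega u_f\,dy}{1+a\int_\Omega e\,dy}.
\]
Hence $aI_a=\lambda(a,f,\Omega)$ as given by the first expression in \eqref{pulo}, which establishes \eqref{chara}.

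The only step needing justification is the equality of the two expressions in \eqref{pulo}, namely $\int_\Omega u_f\,dy=\int_\Omega fe\,dy$. I would obtain it from Green's second identity, or equivalently from the weak formulations, using $e$ and $u_f$ as test functions in each other's equations:
\[
\int_\Omega u_f\,dy=\int_\Omega u_f(-\Delta e)\,dy=\int_\Omega\nabla u_f\cdot\nabla e\,dy=\int_\Omega(-\Delta u_f)e\,dy=\int_\Omega fe\,dy.
\]
This is legitimate because $u_f,e\in H_0^1(\Omega)$ and $f\in L^\infty(\Omega)$. No real obstacle is expected; the only delicate point is this regularity justification, and it is covered by working in the weak $H_0^1$ framework.
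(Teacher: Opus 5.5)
Your proposal is correct and follows essentially the same route as the paper. Both arguments use linearity and uniqueness to get $u_a=u_f-aI_ae$, integrate to solve for $I_a$, and invoke $\int_\Omega u_f\,dy=\int_\Omega fe\,dy$ via Green's identity; you simply spell out the uniqueness and regularity justifications that the paper leaves implicit.
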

\begin{proof}
Set $I_a:=\int_\Omega u_a\,dy$. By linearity and the definition of $e$,
\[
u_a=u_f-aI_ae.
\]
Integrating and using $\int_\Omega u_f\,dy=\int_\Omega fe\,dy$, we obtain
\[
I_a=\frac{\int_\Omega fe\,dy}{1+a\int_\Omega e\,dy},
\]
which gives \eqref{chara} and \eqref{pulo}.
This is also consistent with the decomposition in Proposition \ref{prop:sol}.
\end{proof}

\begin{lemma}\label{smart}
Let $f\in\mathcal{A}$ and consider $\lambda(a, f, \Omega)$  as in \eqref{pulo}. Then the following properties about the continuous map $ a\in [0, +\infty)\mapsto \lambda(a,f,\Omega)\in \mathbb{R}$ hold:

\begin{enumerate}
    \item[$(i)$] \(\lambda(0,f,\Omega)=0\);

    \item[$(ii)$] the map
    \[
    a\mapsto \lambda(a,f,\Omega)
    \]
    is strictly increasing on \([0,+\infty)\);

    \item[$(iii)$] the map
    \[
    a\mapsto \lambda(a,f,\Omega)
    \]
    is strictly concave on \([0,+\infty)\);

    \item[$(iv)$]
    \[
    \lim_{a\to+\infty}\lambda(a,f,\Omega)
    =
    \frac{\displaystyle\int_\Omega f e\,dy}
    {\displaystyle\int_\Omega e\,dy}.
    \]
\end{enumerate}
\end{lemma}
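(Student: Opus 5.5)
The map $a\mapsto\lambda(a,f,\Omega)$ in \eqref{pulo} is a Möbius (linear-fractional) function of $a$. So I will verify all four properties by direct calculus after fixing notation for the two constants involved. Set
\[
A:=\int_\Omega fe\,dy,\qquad E:=\int_\Omega e\,dy,
\]
so that $\lambda(a)=\dfrac{aA}{1+aE}$ for $a\ge 0$. I first record the signs of these constants. Since $e>0$ in $\Omega$, we have $E>0$. Since $f\ge 0$ is nonconstant, it is not a.e.\ zero, so $A>0$. Because $1+aE\ge 1>0$ on $[0,+\infty)$, $\lambda$ is smooth (in fact real-analytic) there, which gives the continuity asserted in the statement.

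Items $(i)$ and $(iv)$ are immediate. For $(i)$, the numerator vanishes at $a=0$. For $(iv)$, I write $\lambda(a)=\dfrac{A}{1/a+E}$ for $a>0$ and let $a\to+\infty$, obtaining $A/E$, which is exactly the claimed limit.

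For $(ii)$ and $(iii)$ I differentiate:
\[
\lambda'(a)=\frac{A(1+aE)-aAE}{(1+aE)^2}=\frac{A}{(1+aE)^2}>0,
\qquad
\lambda''(a)=-\frac{2AE}{(1+aE)^3}<0 .
\]
A strictly positive derivative gives strict monotonicity, and a strictly negative second derivative gives strict concavity on $[0,+\infty)$. Both conclusions rest on $A>0$ and $E>0$.

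The only step needing any care is the positivity $A=\int_\Omega fe\,dy>0$. Without it, $\lambda$ would be identically zero and neither strict property would hold. This is where the hypothesis $f\in\mathcal{A}$ enters: $f\ge0$ and nonconstant means $f>0$ on a set of positive measure, and $e>0$ there. Alternatively, one can use the identity $\int_\Omega u_f\,dy=\int_\Omega fe\,dy$ from Lemma~\ref{ok} together with $u_f>0$ by the strong maximum principle.
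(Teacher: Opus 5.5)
Your proof is correct and follows essentially the same route as the paper: write $\lambda(a,f,\Omega)=\frac{aC}{1+aD}$ with $C=\int_\Omega fe\,dy>0$ and $D=\int_\Omega e\,dy>0$, then compute the first and second derivatives and the limit directly. Your justification that $\int_\Omega fe\,dy>0$ (a nonconstant $f\ge 0$ is positive on a set of positive measure, where $e>0$) makes explicit a step the paper states in one line.
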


\begin{proof}
Let
\[
C:=\int_\Omega f e\,dy
\quad\text{and}\quad
D:=\int_\Omega e\,dy.
\]
Since \(f\in\mathcal A\) and \(e>0\) in \(\Omega\), we have
\[
C>0
\quad\text{and}\quad
D>0.
\]
Hence,
\begin{equation}\label{lambdaCD}
\lambda(a,f,\Omega)=\frac{aC}{1+aD},
\qquad a\ge0.
\end{equation}

Property (i) is immediate, since
\[
\lambda(0,f,\Omega)=0.
\]

Now,
\[
\frac{d}{da}\lambda(a,f,\Omega)
=
\frac{C(1+aD)-aCD}{(1+aD)^2}
=
\frac{C}{(1+aD)^2}.
\]
Since \(C>0\), it follows that
\[
\frac{d}{da}\lambda(a,f,\Omega)>0
\qquad \forall a\ge0,
\]
which proves (ii).

Moreover,
\[
\frac{d^2}{da^2}\lambda(a,f,\Omega)
=
-\frac{2CD}{(1+aD)^3}.
\]
Since \(C,D>0\), we obtain
\[
\frac{d^2}{da^2}\lambda(a,f,\Omega)<0
\qquad \forall a\ge0.
\]
Thus, we have proved (iii).

Finally, dividing numerator and denominator in \eqref{lambdaCD} by \(a\), we get
\[
\lambda(a,f,\Omega)
=
\frac{C}{\frac1a+D}.
\]
Therefore,
\[
\lim_{a\to+\infty}\lambda(a,f,\Omega)
=
\frac{C}{D}
=
\frac{\displaystyle\int_\Omega f e\,dy}
{\displaystyle\int_\Omega e\,dy},
\]
which concludes the proof.
\end{proof}

By exploring the characterization \eqref{chara} of the solution to the problem \eqref{eq:P01} (different from that given in \eqref{for1}), we obtain deeper insight into the type of degeneracy exhibited by $u_a$ at the critical level $a=a^\ast$. The next theorem addresses the issue highlighted in \cite{BahiaPimentaSantos} of finding sufficient conditions on $f$ under which the maximum principle breaks down through an interior contact point. To this end, we first introduce the following positive real constants:
\begin{equation}\label{star}
    \lambda^{\ast}(f) := \inf_{\Omega} \frac{u_f}{e}
\end{equation}
and
\begin{equation}\label{twostars}
    \lambda^{\ast\ast}(f) := \inf_{ \partial\Omega} \frac{\partial_{\eta} u_f}{\partial_\eta e}.
\end{equation}

The quotient $u_f/e$ admits a positive continuous extension to $\overline\Omega$, whose boundary values are
\[
\frac{\partial_\eta u_f}{\partial_\eta e}.
\]
Indeed, $u_f,e\in C^{1,\alpha}(\overline\Omega)$, both functions vanish on $\partial\Omega$, and the Hopf lemma gives $\partial_\eta u_f<0$ and $\partial_\eta e<0$ on $\partial\Omega$. Consequently, compactness of $\overline\Omega$ gives $\lambda^\ast(f)>0$, and compactness of $\partial\Omega$ ensures that the infimum in \eqref{twostars} is attained.

Before presenting the main theorem, we state several important properties of $\lambda^\ast$ and $\lambda^{\ast\ast}$ in the following lemma.
\begin{lemma}\label{lem:lambda-equality}
Suppose that $f\in\mathcal{A}$. Then,
\begin{enumerate}
\item[$(i)$] $\lambda^{\ast}(f)\leq \lambda^{\ast\ast}(f)$;

\item[$(ii)$] $\displaystyle\lambda^{\ast}(f)< \frac{\displaystyle\int_{\Omega}fe dy}{\displaystyle\int_{\Omega}e dy}$;

\item[$(iii)$] $\displaystyle\lambda^{\ast\ast}(f)\leq \frac{1}{|\Omega|}\int_{\Omega}f dy$.

\end{enumerate}
\end{lemma}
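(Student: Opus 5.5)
Each of the three items will be obtained from a direct comparison or integration argument. The continuous extension of $u_f/e$ to $\overline\Omega$ noted above does most of the work.

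For (i), I will use that the ratio $u_f/e$, extended continuously to $\overline\Omega$, has boundary values $\partial_\eta u_f/\partial_\eta e$. Since $\overline\Omega$ contains $\partial\Omega$, the infimum of the extended ratio over $\overline\Omega$ is at most its infimum over $\partial\Omega$, which is $\lambda^{\ast\ast}(f)$. By continuity and density of $\Omega$ in $\overline\Omega$, the infimum over $\overline\Omega$ equals the infimum over $\Omega$, namely $\lambda^\ast(f)$. This gives (i).

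For (ii), I start from the pointwise inequality $u_f\ge \lambda^\ast e$ in $\Omega$. Integrating it, and using $\int_\Omega u_f\,dy=\int_\Omega fe\,dy$ (proved by testing $-\Delta u_f=f$ against $e$ and $-\Delta e=1$ against $u_f$), yields the non-strict inequality. For strictness, suppose equality holds. Then $w:=u_f-\lambda^\ast e\ge0$ and $\int_\Omega w\,dy=0$, so $w\equiv0$. This gives $f=-\Delta u_f=\lambda^\ast$ a.e., so $f$ is constant, contradicting $f\in\mathcal A$. The step needing care is passing from $w=0$ to $f$ being constant for $f\in L^\infty$. Here $u_f\in W^{2,p}$ for all $p<\infty$, so $-\Delta w=f-\lambda^\ast$ holds a.e., and $w\equiv0$ forces $f=\lambda^\ast$ a.e.

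For (iii), I integrate $-\Delta u_f=f$ over $\Omega$ and apply the divergence theorem to get $\int_\Omega f\,dy=-\int_{\partial\Omega}\partial_\eta u_f\,d\sigma$. Similarly, $|\Omega|=-\int_{\partial\Omega}\partial_\eta e\,d\sigma$. On $\partial\Omega$ we have $-\partial_\eta e>0$, and by the definition of $\lambda^{\ast\ast}$,
\[
-\partial_\eta u_f=\frac{\partial_\eta u_f}{\partial_\eta e}\,(-\partial_\eta e)\ge \lambda^{\ast\ast}(-\partial_\eta e).
\]
Integrating over $\partial\Omega$ gives $\int_\Omega f\,dy\ge\lambda^{\ast\ast}|\Omega|$, which is (iii). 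The divergence theorem applies because $u_f\in C^{1,\alpha}(\overline\Omega)\cap W^{2,p}$. I do not expect a real obstacle here; the only care point is the regularity justification in (ii), mentioned above.
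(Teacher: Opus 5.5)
Your proposal is correct. Items (ii) and (iii) follow the paper's argument essentially verbatim: you integrate $u_f-\lambda^\ast e\ge0$ using $\int_\Omega u_f\,dy=\int_\Omega fe\,dy$ and rule out $u_f\equiv\lambda^\ast e$ because it forces $f\equiv\lambda^\ast$, and you integrate $\partial_\eta u_f\le\lambda^{\ast\ast}\partial_\eta e$ over $\partial\Omega$ and apply the divergence theorem. For (i), you use the continuous extension of $u_f/e$ to $\overline\Omega$, while the paper notes that $w=u_f-\lambda^\ast(f)e\ge0$ vanishes on $\partial\Omega$, so $\partial_\eta w\le0$ there, and then divides by $\partial_\eta e<0$; both routes amount to the same one-sided normal-limit computation, so either is fine.
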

\begin{proof}
(\textit{i}) Define $w := u_f - \lambda^\ast(f) e$. By the definitions of $\lambda^\ast(f)$ and $w$, we have
\[
w \geq 0 \quad \text{in } \Omega, \qquad w = 0 \quad \text{on } \partial\Omega.
\]
We claim that $w \not\equiv 0$. Indeed, if $w \equiv 0$, then $u_f = \lambda^\ast(f) e$ in $\Omega$. Applying the Laplacian operator to both sides yields $f = \lambda^\ast(f)$ in $\Omega$, which contradicts $f \in \mathcal{A}$.

Since $w \geq 0$ in $\Omega$ and $w = 0$ on $\partial\Omega$, the outward normal derivative satisfies
\[
\partial_\eta w \leq 0 \quad \text{on } \partial\Omega,
\]
which implies
\[
\partial_\eta u_f - \lambda^\ast(f) \partial_\eta e \leq 0 \quad \text{on } \partial\Omega.
\]
Since $\partial_\eta e < 0$ on $\partial\Omega$, dividing by $\partial_\eta e$, we obtain the inequality
\[
\lambda^\ast(f) \leq \frac{\partial_\eta u_f}{\partial_\eta e} \quad \text{on } \partial\Omega.
\]
Taking the infimum over $\partial\Omega$, we conclude that $\lambda^\ast(f) \leq \lambda^{\ast\ast}(f)$.

\medskip
(\textit{ii}) From the definition of $\lambda^\ast(f)$, we have
\begin{equation}\label{santacruz}
u_f - \lambda^\ast(f) e \geq 0 \quad \text{in } \Omega.
\end{equation}
\textbf{Claim:} There exists a measurable set $\Omega_0 \subset \Omega$ with positive measure $|\Omega_0| > 0$ such that $u_f - \lambda^\ast(f)e > 0$ in $\Omega_0$.

Indeed, if the claim were false, then $u_f = \lambda^\ast(f) e$ almost everywhere in $\Omega$. Applying the Laplacian operator again yields $f = \lambda^\ast(f)$ in $\Omega$, contradicting $f \in \mathcal{A}$.

Integrating \eqref{santacruz} over $\Omega$ and applying the claim, we obtain
\[
\int_{\Omega} u_f \, dy - \lambda^\ast(f) \int_{\Omega} e \, dy > 0.
\]
Multiplying \eqref{solut} by  $u_f$, integrating by parts, and thanks to Green's second identity and \eqref{solu}, we have $$\int_\Omega fe \, dy = \int_{\Omega} u_f \, dy$$ 
Then, replacing this relation yields the desired inequality.

\medskip
(\textit{iii}) By definition of $\lambda^{\ast\ast}(f)$ and since $\partial_\eta e < 0$ on $\partial\Omega$, we have
\[
\partial_\eta u_f \leq \lambda^{\ast\ast}(f) \partial_\eta e \quad \text{on } \partial\Omega.
\]
Integrating over $\partial\Omega$ and applying the Divergence Theorem yields
\[
-\int_{\Omega} f \, dy = \int_{\partial\Omega} \partial_\eta u_f \, dS \leq \lambda^{\ast\ast}(f) \int_{\partial\Omega} \partial_\eta e \, dS =  -\lambda^{\ast\ast}(f)\int_\Omega 1\,dy=-\lambda^{\ast\ast}(f) |\Omega|.
\]
Rearranging the inequality gives $$\lambda^{\ast\ast}(f) \leq \frac{1}{|\Omega|} \int_{\Omega} f \, dy.$$
\end{proof}
\begin{remark}\label{rmq:lambda}
As an immediate consequence of item (ii) of Lemma~\ref{lem:lambda-equality}, there exists $a>0$ such that $\lambda(a,f,\Omega)=\lambda^*(f)$. Indeed, setting
\[
a = \frac{\lambda^\ast(f)}{\displaystyle\int_\Omega f e \, dy - \lambda^\ast(f) \int_\Omega e \, dy},
\]
the strict inequality in item $(ii)$ of Lemma~\ref{lem:lambda-equality} ensures that $a > 0$. Hence, by \eqref{pulo}, we obtain $\lambda(a,f,\Omega)=\lambda^*(f)$.
\end{remark}
We are now in a position to state the main result.

\begin{theorem}\label{teo:interior}
Let $f \in \mathcal{A}$. Then the following statements hold:
\begin{itemize}
    \item [(i)] There exists $x_1 \in \partial\Omega$ such that $\partial_\eta u_{a^*}(x_1)=0$ if and only if $\lambda^{\ast\ast}(f) = \lambda^\ast(f)$.
    \item [(ii)] There exists $x_0 \in \Omega$ such that $u_{a^*}(x_0) = 0$ if and only if $\lambda^\ast(f)$ is attained at $x_0$.
\end{itemize}
\end{theorem}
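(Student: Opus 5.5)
The plan is to first identify $a^*$ through Lemma~\ref{ok} and Lemma~\ref{smart}, and then read both items off the formula $u_{a^*}=u_f-\lambda^*(f)e$. Write $\lambda(a):=\lambda(a,f,\Omega)$. By Lemma~\ref{ok},
\[
u_a=e\Bigl(\frac{u_f}{e}-\lambda(a)\Bigr)\quad\text{in }\Omega,
\qquad
\partial_\eta u_a=\partial_\eta e\Bigl(\frac{\partial_\eta u_f}{\partial_\eta e}-\lambda(a)\Bigr)\quad\text{on }\partial\Omega .
\]
Since $e>0$ in $\Omega$ and $\partial_\eta e<0$ on $\partial\Omega$, this gives an equivalence. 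We have $u_a>0$ in $\Omega$ and $\partial_\eta u_a<0$ on $\partial\Omega$ if and only if the continuous extension $q$ of $u_f/e$ to $\overline\Omega$ satisfies $q>\lambda(a)$ on $\overline\Omega$. Here $q$ equals $\partial_\eta u_f/\partial_\eta e$ on $\partial\Omega$.

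Because $\overline\Omega$ is compact, $\min_{\overline\Omega} q=\min\{\lambda^*(f),\lambda^{**}(f)\}$. By Lemma~\ref{lem:lambda-equality}(i) this minimum equals $\lambda^*(f)$. So the condition above is exactly $\lambda(a)<\lambda^*(f)$.

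By Lemma~\ref{smart}(i),(ii) the map $a\mapsto\lambda(a)$ is continuous and strictly increasing from $0$. Remark~\ref{rmq:lambda} gives a unique $\bar a>0$ with $\lambda(\bar a)=\lambda^*(f)$. Hence the admissible set in Definition~\ref{Defi1} is exactly $[0,\bar a)$, so $a^*=\bar a$ and $\lambda(a^*)=\lambda^*(f)$. This also yields the explicit formula stated in the introduction. Consequently
\[
u_{a^*}=u_f-\lambda^*(f)e .
\]

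For (ii): $u_{a^*}(x_0)=0$ at some $x_0\in\Omega$ if and only if $u_f(x_0)/e(x_0)=\lambda^*(f)$, since $e(x_0)>0$. That is precisely the statement that the infimum $\lambda^*(f)$ is attained at the interior point $x_0$.

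For (i): $\partial_\eta u_{a^*}(x_1)=\partial_\eta u_f(x_1)-\lambda^*(f)\partial_\eta e(x_1)$, and this vanishes if and only if $\partial_\eta u_f(x_1)/\partial_\eta e(x_1)=\lambda^*(f)$. If such an $x_1$ exists, then $\lambda^{**}(f)\le\lambda^*(f)$, and Lemma~\ref{lem:lambda-equality}(i) gives equality. Conversely, if $\lambda^{**}(f)=\lambda^*(f)$, the infimum defining $\lambda^{**}(f)$ is attained on the compact set $\partial\Omega$, as noted before the lemma. The point $x_1$ where it is attained gives $\partial_\eta u_{a^*}(x_1)=0$.

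The main delicate step is the identification $\lambda(a^*)=\lambda^*(f)$. It needs the equivalence between the strict conditions in Definition~\ref{Defi1} and $\lambda(a)<\min_{\overline\Omega}q$. In particular it uses that $u_f/e$ extends continuously to $\overline\Omega$ with boundary values $\partial_\eta u_f/\partial_\eta e$, justified just before Lemma~\ref{lem:lambda-equality} via $C^{1,\alpha}$ regularity and Hopf. It also uses compactness, so that strict inequality at every point of $\overline\Omega$ is the same as strict inequality for the minimum. The remaining steps are direct consequences of the representation formula.
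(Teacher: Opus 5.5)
Your argument is correct, but it is organized differently from the paper's.

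\textbf{The paper's route.} The paper treats each item direction by direction. In the forward implications it uses $u_{a^\ast}\ge 0$ in $\Omega$ and $\partial_\eta u_{a^\ast}\le 0$ on $\partial\Omega$, which it takes from the definition of $a^\ast$ together with the earlier result that $a^\ast$ is a positive real number. With these it squeezes $\lambda(a^\ast,f,\Omega)$ between $\lambda^\ast(f)$ and $\lambda^{\ast\ast}(f)$. In each converse it builds $\bar a$ with $\lambda(\bar a,f,\Omega)=\lambda^\ast(f)$ and checks by hand that every $b<\bar a$ is admissible while no $b>\bar a$ is. For this it uses the interior minimizer $x_0$ in (ii) (giving $u_b(x_0)<0$) or the boundary minimizer $x_1$ in (i) (giving $\partial_\eta u_b(x_1)>0$). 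So the identity $\bar a=a^\ast$ is obtained only under the hypothesis of the item at hand. The general formula of Corollary~\ref{cor:a-star-formula} then still needs the external dichotomy at the critical level from Bahia--Pimenta--Santos.

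\textbf{Your route.} You identify $a^\ast$ once, for every $f\in\mathcal A$. The strict conditions of Definition~\ref{Defi1} are equivalent to $\lambda(a,f,\Omega)<\min_{\overline\Omega}q=\lambda^\ast(f)$, so the admissible set is exactly $[0,\bar a)$. After that, both items are pointwise restatements of $u_{a^\ast}=u_f-\lambda^\ast(f)e$.

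\textbf{Comparison.} Your route is shorter and self-contained. It gives, with no appeal to the earlier work:
\begin{itemize}
\item finiteness of $a^\ast$;
\item the explicit formula for $a^\ast$;
\item the dichotomy behind Remark~\ref{rmq:simultaneous}, since the minimum of $q$ over the compact set $\overline\Omega$ is attained either in $\Omega$ or on $\partial\Omega$.
\end{itemize}
The paper's version mainly has an expository advantage: within each item it shows which kind of degeneracy makes parameters above $\bar a$ inadmissible.
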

\begin{proof}
For simplicity, we write $a^\ast = a^\ast(f)$ throughout this proof.

\medskip
(\textit{i}) Assume first that there exists $x_1 \in \partial\Omega$ such that $\partial_\eta u_{a^\ast}(x_1) = 0$. By \eqref{chara}, we have
\begin{equation}\label{deri}
u_{a^\ast} = u_f - \lambda(a^\ast, f, \Omega) e \quad \text{in } \overline{\Omega}.
\end{equation}
By definition of $a^\ast$, we have $\partial_\eta u_{a^\ast} \leq 0$ on $\partial\Omega$. Consequently,
\[
\frac{\partial_\eta u_f}{\partial_\eta e} \geq \lambda(a^\ast, f, \Omega) \quad \text{on } \partial\Omega.
\]
Taking the infimum over $\partial\Omega$ and using \eqref{twostars}, we obtain
\begin{equation}\label{ok5}
\lambda^{\ast\ast}(f) \geq \lambda(a^\ast, f, \Omega).
\end{equation}
On the other hand, since $\partial_\eta u_{a^\ast}(x_1) = 0$, it follows from \eqref{deri} and definition \eqref{twostars} of $\lambda^{\ast\ast}(f)$ that
\begin{equation}\label{ok6}
\lambda(a^\ast, f, \Omega) = \frac{\partial_\eta u_f(x_1)}{\partial_\eta e(x_1)} \geq \lambda^{\ast\ast}(f).
\end{equation}
Combining \eqref{ok5} and \eqref{ok6} yields
\begin{equation}\label{iphone}
\lambda^{\ast\ast}(f) = \lambda(a^\ast, f, \Omega),
\end{equation}
and the infimum in \eqref{twostars} is attained at $x_1$. Moreover, since $u_{a^\ast} \geq 0$ in $\Omega$, identity \eqref{deri} implies
\begin{equation}\label{Maio}
\lambda(a^\ast, f, \Omega) \leq \lambda^\ast(f).
\end{equation}
The first implication then follows from \eqref{iphone}, \eqref{Maio}, and item (\textit{i}) of Lemma~\ref{lem:lambda-equality}.

\medskip
Conversely, assume that $\lambda^{\ast\ast}(f) = \lambda^\ast(f)$. Let $a > 0$ be the parameter given by Remark~\ref{rmq:lambda}. By \eqref{chara} and the definition of $\lambda^{\ast\ast}(f)$, we have
\begin{equation}\label{ok9}
\partial_\eta u_a = \partial_\eta u_f - \lambda(a, f, \Omega) \partial_\eta e = \partial_\eta u_f - \lambda^{\ast\ast}(f) \partial_\eta e \leq 0 \quad \text{on } \partial\Omega.
\end{equation}
Since $\partial\Omega$ is compact, the infimum $\lambda^{\ast\ast}(f)$ is attained at some $x_1 \in \partial\Omega$, so
\begin{equation}\label{insta}
\partial_\eta u_a(x_1) = \partial_\eta u_f(x_1) - \lambda^{\ast\ast}(f) \partial_\eta e(x_1) = 0.
\end{equation}
Finally, the equality $\lambda(a, f, \Omega) = \lambda^\ast(f)$ implies
\begin{equation}\label{things}
u_a \geq 0 \quad \text{in } \Omega.
\end{equation}
Together, \eqref{ok9}, \eqref{insta}, and \eqref{things} describe the loss of the strict boundary inequality at $a$.
It remains to identify $a$ with the critical parameter.
Indeed, if $0\leq b<a$, then Lemma~\ref{smart} gives $\lambda(b,f,\Omega)<\lambda^\ast(f)$, and hence
\[
u_b=u_a+\bigl(\lambda^\ast(f)-\lambda(b,f,\Omega)\bigr)e>0
\quad\text{in }\Omega.
\]
Moreover,
\[
\partial_\eta u_b
=
\partial_\eta u_a
+\bigl(\lambda^\ast(f)-\lambda(b,f,\Omega)\bigr)\partial_\eta e<0
\quad\text{on }\partial\Omega.
\]
If $b>a$, then at the point $x_1$ in \eqref{insta},
\[
\partial_\eta u_b(x_1)
=
\partial_\eta e(x_1)
\bigl(\lambda^\ast(f)-\lambda(b,f,\Omega)\bigr)>0.
\]
Thus every $b<a$ belongs to the set in Definition~\ref{Defi1}, whereas no $b>a$ does, proving $a=a^\ast(f)$.

\medskip
(\textit{ii}) Assume first that there exists $x_0 \in \Omega$ such that $u_{a^\ast}(x_0) = 0$. By definition of $a^\ast$,
\[
u_{a^\ast} \geq 0 \quad \text{in } \Omega.
\]
Using the representation \eqref{chara} of $u_{a^\ast}$, this yields $u_f - \lambda(a^\ast, f, \Omega) e \geq 0$ in $\Omega$, so
\[
\frac{u_f}{e} \geq \lambda(a^\ast, f, \Omega) \quad \text{in } \Omega.
\]
Taking the infimum over $\Omega$ and using \eqref{star}, we obtain
\begin{equation}\label{ok2}
\lambda^\ast(f) \geq \lambda(a^\ast, f, \Omega).
\end{equation}
On the other hand, since $u_{a^\ast}(x_0) = 0$, we have
\begin{equation}\label{ok3}
\lambda(a^\ast, f, \Omega) = \frac{u_f(x_0)}{e(x_0)} \geq \lambda^\ast(f).
\end{equation}
Combining \eqref{ok2} and \eqref{ok3} gives
\begin{equation}\label{ok4}
\lambda^\ast(f) = \lambda(a^\ast, f, \Omega),
\end{equation}
and by \eqref{ok3}, the infimum in \eqref{star} is attained at $x_0$.

\medskip
Conversely, assume that $\lambda^\ast(f)$ is attained at some $x_0 \in \Omega$. By Lemma~\ref{lem:lambda-equality}(ii) and Remark~\ref{rmq:lambda}, the parameter
\begin{equation}\label{isso}
\overline{a} := \frac{\lambda^\ast(f)}{\int_\Omega f e \, dy - \lambda^\ast(f) \int_\Omega e \, dy}
\end{equation}
is well-defined and positive, satisfying
\begin{equation}\label{agora}
\lambda(\overline{a}, f, \Omega) = \lambda^\ast(f).
\end{equation}
By \eqref{chara}, we derive
\begin{equation}\label{nisso}
u_{\overline{a}} = u_f - \lambda(\overline{a}, f, \Omega) e \geq 0 \quad \text{in } \Omega,
\end{equation}
where the inequality follows from \eqref{agora} and the definition \eqref{star}. Since $\lambda^\ast(f)$ is attained at $x_0$, we have $\lambda^\ast(f) = u_f(x_0) / e(x_0)$, which implies
\begin{equation}\label{disso}
u_{\overline{a}}(x_0) = 0.
\end{equation}
Furthermore, since $u_{\overline{a}} \geq 0$ in $\Omega$ and $u_{\overline{a}} = 0$ on $\partial\Omega$, we have
\begin{equation}\label{acabamos}
\partial_\eta u_{\overline{a}} \leq 0 \quad \text{on } \partial\Omega.
\end{equation}
Relations \eqref{nisso}, \eqref{disso}, and \eqref{acabamos} describe interior contact at $\overline a$.
If $0\leq b<\overline a$, then Lemma~\ref{smart} gives $\lambda(b,f,\Omega)<\lambda^\ast(f)$, so
\[
u_b=u_{\overline a}+\bigl(\lambda^\ast(f)-\lambda(b,f,\Omega)\bigr)e>0
\quad\text{in }\Omega,
\]
and
\[
\partial_\eta u_b
=
\partial_\eta u_{\overline a}
+\bigl(\lambda^\ast(f)-\lambda(b,f,\Omega)\bigr)\partial_\eta e<0
\quad\text{on }\partial\Omega.
\]
On the other hand, if $b>\overline a$, then
\[
u_b(x_0)=e(x_0)\bigl(\lambda^\ast(f)-\lambda(b,f,\Omega)\bigr)<0.
\]
Therefore, $\overline a=a^\ast(f)$, which implies $u_{a^\ast}(x_0)=0$.
This identification agrees with \cite[Theorem $2.5$]{BahiaPimentaSantos}.
\end{proof}


As an immediate consequence of Theorem~\ref{teo:interior}, we obtain an explicit formula for the critical parameter $a^*(f)$:
\begin{corollary}\label{cor:a-star-formula}
Let $f \in \mathcal{A}$. Then, the critical parameter $a^*(f)$ satisfies the explicit formula
\begin{equation}\label{formula1}
a^*(f) = \frac{\lambda^\ast(f)}{\displaystyle\int_\Omega f e \, dy - \lambda^\ast(f) \int_\Omega e \, dy}.
\end{equation}
\end{corollary}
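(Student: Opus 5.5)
The plan is to show that $a^\ast(f)$ coincides with the parameter $\overline a$ of Remark~\ref{rmq:lambda}, i.e.\ the unique $\overline a>0$ with $\lambda(\overline a,f,\Omega)=\lambda^\ast(f)$. Once this is known, \eqref{formula1} follows by solving $\frac{\overline a\int_\Omega fe\,dy}{1+\overline a\int_\Omega e\,dy}=\lambda^\ast(f)$ for $\overline a$. That solution is legitimate because Lemma~\ref{lem:lambda-equality}(ii) makes the denominator $\int_\Omega fe\,dy-\lambda^\ast(f)\int_\Omega e\,dy$ strictly positive. Uniqueness of $\overline a$ comes from the strict monotonicity in Lemma~\ref{smart}(ii), together with $\lambda(0)=0<\lambda^\ast(f)$ and the limit in Lemma~\ref{smart}(iv), which lies strictly above $\lambda^\ast(f)$.

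To identify $\overline a$ with $a^\ast$, I would argue directly from the representation \eqref{chara}, $u_b=u_f-\lambda(b,f,\Omega)e$, without splitting into the cases of Theorem~\ref{teo:interior}.

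For $0\le b<\overline a$, Lemma~\ref{smart}(ii) gives $\lambda(b)<\lambda^\ast(f)$. Hence
\[
u_b=\bigl(u_f-\lambda^\ast(f)e\bigr)+\bigl(\lambda^\ast(f)-\lambda(b)\bigr)e .
\]
The first bracket is $\ge0$ in $\Omega$, with $\partial_\eta\le0$ on $\partial\Omega$, by the definition of $\lambda^\ast(f)$. The second term is $>0$ in $\Omega$ and has negative normal derivative, since $\partial_\eta e<0$. So every $b<\overline a$ lies in the set defining $a^\ast$, and $a^\ast\ge\overline a$.

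For $b>\overline a$, we have $\lambda(b)>\lambda^\ast(f)$, and the aim is to show that $u_b$ fails the defining condition somewhere. Here I would use the continuous extension of $u_f/e$ to $\overline\Omega$ noted before Lemma~\ref{lem:lambda-equality}. Its infimum over $\overline\Omega$ equals $\lambda^\ast(f)$ and is attained at some $z\in\overline\Omega$, so there are two cases.
\begin{itemize}
\item If $z\in\Omega$, then $u_b(z)=e(z)\bigl(\lambda^\ast(f)-\lambda(b)\bigr)<0$.
\item If $z\in\partial\Omega$, then $\partial_\eta u_f(z)/\partial_\eta e(z)=\lambda^\ast(f)$, so $\partial_\eta u_b(z)=\partial_\eta e(z)\bigl(\lambda^\ast(f)-\lambda(b)\bigr)>0$.
\end{itemize}
In either case $b$ is not admissible. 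Because $\lambda$ is increasing, the admissible set is an interval, so this gives $a^\ast\le\overline a$.

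The main obstacle is the boundary case, that is, justifying that the infimum of the extended quotient is attained on $\overline\Omega$ and that its boundary value is exactly $\partial_\eta u_f/\partial_\eta e$. This relies on the $C^{1,\alpha}$ regularity and the Hopf lemma, both stated in the paper before Lemma~\ref{lem:lambda-equality}, so I will take them as given. Alternatively, the proof of Theorem~\ref{teo:interior} already shows $a^\ast=\overline a$ in each degeneracy case, and by the result of \cite{BahiaPimentaSantos} one of the two degeneracies always occurs at $a^\ast$. So the corollary also follows by citing that theorem.
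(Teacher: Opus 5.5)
Your argument is correct, but it takes a different route from the paper's proof.

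\textbf{The paper's route.} The paper proves the corollary by invoking the dichotomy of \cite[Theorem~2.5(ii)]{BahiaPimentaSantos}: at $a^\ast$, either $u_{a^\ast}$ vanishes at an interior point or $\partial_\eta u_{a^\ast}$ vanishes at a boundary point. In each case it then reads off $\lambda(a^\ast,f,\Omega)=\lambda^\ast(f)$ from the forward implications of Theorem~\ref{teo:interior}. In the boundary case this uses \eqref{iphone} together with \eqref{Maio} and Lemma~\ref{lem:lambda-equality}(i); in the interior case it uses \eqref{ok4}. Finally it inverts \eqref{pulo}.

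\textbf{Your route.} You identify $a^\ast(f)$ with $\overline a$ directly.
\begin{itemize}
\item Your sub-critical half is the same computation as in the converse parts of the proof of Theorem~\ref{teo:interior}.
\item For the super-critical half, you replace the case hypotheses of that theorem by one observation: the continuous extension of $u_f/e$ attains its minimum $\lambda^\ast(f)$ somewhere on $\overline\Omega$. This automatically puts you in one of the two converse situations.
\item The interval remark is not needed. Non-admissibility of every $b>\overline a$ already gives $a^\ast(f)\le\overline a$.
\end{itemize}

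\textbf{What each buys.} Your proof is self-contained. It uses only the representation \eqref{chara}, Lemmas~\ref{smart} and~\ref{lem:lambda-equality}(ii), and the $C^{1,\alpha}$/Hopf extension fact stated in the paper. As a by-product it gives finiteness of $a^\ast(f)$. It also recovers the Bahia--Pimenta--Santos dichotomy itself, since $u_{\overline a}=u_f-\lambda^\ast(f)e$ either vanishes at $z$ or has zero normal derivative at $z$. Hence you also get Remark~\ref{rmq:simultaneous} without any citation. This is the same mechanism the paper uses in its final subsection on the general operator.

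The paper's route is shorter but depends on the external theorem. Your closing alternative is exactly the paper's proof.
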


\begin{proof}
According to \cite[item (\textit{ii}) of Theorem~2.5]{BahiaPimentaSantos}, at the critical threshold, only two possibilities can occur: either there exists $x_1 \in \partial\Omega$ such that $\partial_\eta u_{a^*}(x_1) = 0$, or there exists $x_0 \in \Omega$ such that $u_{a^*}(x_0) = 0$. 
{\color{black}If the first case holds, we have statement~(\textit{i}) of Theorem~\ref{teo:interior}, then $\lambda^\ast(f) = \lambda^{\ast\ast}(f)$ which is equal to $ \lambda(a^\ast, f, \Omega)$  thanks to \eqref{iphone}. Using  \eqref{pulo}, yields \eqref{formula1}. If the second case holds, we have statement~(\textit{ii}) of Theorem~\ref{teo:interior},} the conclusion follows directly from \eqref{ok4} together with \eqref{pulo}.
\end{proof}

This result is particularly significant because it provides an explicit formula to compute the critical parameter $a^*(f)$ directly from $f$ in any case of breakdown of the maximum in the critical threshold. Such a characterization was not previously available in the literature, and this formula will play a central role in Section~\ref{otimiz}.

Furthermore, Theorem~\ref{teo:interior} allows us to fully classify the boundary and interior behavior of $u_{a^*}$ as follows.

\begin{corollary}\label{corol:main}
Suppose that $f \in \mathcal{A}$. Then the following statements hold:
\begin{itemize}
    \item [(i)] If $\lambda^\ast(f)$ is attained in $\Omega$ and $\lambda^\ast(f) = \lambda^{\ast\ast}(f)$, then $u_{a^\ast}$ vanishes at some point in $\Omega$, and $\partial_\eta u_{a^\ast}$ vanishes at some point on $\partial\Omega$.
    \item [(ii)] If $\lambda^\ast(f)$ is attained in $\Omega$ and $\lambda^\ast(f) < \lambda^{\ast\ast}(f)$, then $u_{a^\ast}$ vanishes at some point in $\Omega$, and $\partial_\eta u_{a^\ast} < 0$ on $\partial\Omega$.
    \item [(iii)] If $\lambda^\ast(f)$ is not attained in $\Omega$ and $\lambda^\ast(f) = \lambda^{\ast\ast}(f)$, then $u_{a^\ast} > 0$ in $\Omega$, and $\partial_\eta u_{a^\ast}$ vanishes at some point on $\partial\Omega$.
\end{itemize}
\end{corollary}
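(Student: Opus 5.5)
The plan is to derive all three items from Theorem~\ref{teo:interior} together with the identity $\lambda(a^\ast,f,\Omega)=\lambda^\ast(f)$. The key tool is Corollary~\ref{cor:a-star-formula}: the explicit formula \eqref{formula1} is exactly the value $\overline a$ from Remark~\ref{rmq:lambda}, so by \eqref{pulo} we have $\lambda(a^\ast,f,\Omega)=\lambda^\ast(f)$ in every case. Combined with \eqref{chara}, this gives
\[
u_{a^\ast}=u_f-\lambda^\ast(f)\,e \quad\text{in }\overline\Omega,\qquad \partial_\eta u_{a^\ast}=\partial_\eta u_f-\lambda^\ast(f)\,\partial_\eta e\quad\text{on }\partial\Omega .
\]
With this representation each item reduces to reading off signs.

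For item (i), suppose $\lambda^\ast(f)$ is attained at some $x_0\in\Omega$. Theorem~\ref{teo:interior}(ii) then gives $u_{a^\ast}(x_0)=0$. Since also $\lambda^\ast(f)=\lambda^{\ast\ast}(f)$, Theorem~\ref{teo:interior}(i) gives a point $x_1\in\partial\Omega$ with $\partial_\eta u_{a^\ast}(x_1)=0$.

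For item (ii), interior vanishing follows again from Theorem~\ref{teo:interior}(ii). For the boundary, divide the identity above by $\partial_\eta e<0$: for every $x\in\partial\Omega$,
\[
\partial_\eta u_{a^\ast}(x)=\partial_\eta e(x)\Bigl(\frac{\partial_\eta u_f(x)}{\partial_\eta e(x)}-\lambda^\ast(f)\Bigr).
\]
The bracket is at least $\lambda^{\ast\ast}(f)-\lambda^\ast(f)>0$, so $\partial_\eta u_{a^\ast}<0$ on all of $\partial\Omega$.

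For item (iii), the boundary degeneracy comes from Theorem~\ref{teo:interior}(i). Positivity in the interior comes from the contrapositive of Theorem~\ref{teo:interior}(ii): if $u_{a^\ast}(x_0)=0$ for some $x_0\in\Omega$, then $\lambda^\ast(f)$ would be attained at $x_0$, which is excluded. Since $u_{a^\ast}=u_f-\lambda^\ast(f)e\ge0$ by the definition of $\lambda^\ast(f)$, we conclude $u_{a^\ast}>0$ in $\Omega$.

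The only delicate step is the identity $\lambda(a^\ast,f,\Omega)=\lambda^\ast(f)$ in item (ii), where there is no boundary degeneracy to invoke. There I would use the forward direction of Theorem~\ref{teo:interior}(ii), specifically \eqref{ok4}, rather than Corollary~\ref{cor:a-star-formula}. That forward direction needs some interior zero of $u_{a^\ast}$, and the converse direction of Theorem~\ref{teo:interior}(ii) supplies one, since $\lambda^\ast(f)$ is attained in $\Omega$. Hence $\lambda(a^\ast,f,\Omega)=\lambda^\ast(f)$ holds in item (ii) without appealing to the corollary, and the strict inequality $\lambda^{\ast\ast}(f)>\lambda^\ast(f)$ then yields the strict boundary sign.
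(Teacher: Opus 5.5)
Your proof is correct and follows essentially the paper's route. The paper simply declares the three items immediate consequences of Theorem~\ref{teo:interior} (together with Lemma~\ref{smart}); you spell those consequences out through the representation $u_{a^\ast}=u_f-\lambda^\ast(f)e$ and read off the interior and boundary signs. Your special handling of item (ii) is valid but not really needed: in all three items the converse halves of Theorem~\ref{teo:interior} already identify $a^\ast(f)$ with the value $\overline a$ of Remark~\ref{rmq:lambda}, so $\lambda(a^\ast,f,\Omega)=\lambda^\ast(f)$ holds without the external dichotomy behind Corollary~\ref{cor:a-star-formula}.
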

\begin{proof}
The three items are immediate consequences of Lemma \ref{smart} and Theorem \ref{teo:interior}. 
%
%
\end{proof}
\begin{remark}\label{rmq:simultaneous}
Note that the two conditions ``$\lambda^\ast(f)$ is not attained in $\Omega$'' and ``$\lambda^\ast(f) < \lambda^{\ast\ast}(f)$'' can never occur simultaneously. Indeed, by \cite[Theorem~2.5(ii)]{BahiaPimentaSantos}, the breakdown of the maximum principle at $a = a^\ast$ requires that either $u_{a^\ast}$ vanishes at some interior point of $\Omega$ or its normal derivative $\partial_\eta u_{a^\ast}$ vanishes at some boundary point of $\partial\Omega$. By Theorem~\ref{teo:interior}, this implies that either $\lambda^\ast(f)$ is attained in $\Omega$ or $\lambda^\ast(f) = \lambda^{\ast\ast}(f)$.
\end{remark}

The following result is an immediate consequence of item $(ii)$ of Theorem \ref{teo:interior} and clarifies the question raised in Section 5.1 of \cite{BahiaPimentaSantos} regarding assumptions on $f$ that guarantee the occurrence of dead zones.

\begin{corollary}\label{suficiente}
Suppose that $f\in\mathcal{A}$. If there exists an open set $\Omega_0\subset\Omega$ such that $\lambda^\ast(f)$ is attained for all $x\in\Omega_0$, that is,
\begin{equation}\label{zonamorta}
\lambda^\ast(f)=\frac{u_f(x)}{e(x)}, \ \mbox{for all} \ x\in\Omega_0,
\end{equation}
then $\Omega_0$ is a dead zone of $u_{a^\ast}$.    
\end{corollary}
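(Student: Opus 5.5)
The plan is to combine Corollary~\ref{cor:a-star-formula} with the representation \eqref{chara} of Lemma~\ref{ok}, so that the dead zone can be read off pointwise.

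First, by Corollary~\ref{cor:a-star-formula} and \eqref{pulo}, the critical parameter satisfies $\lambda(a^\ast,f,\Omega)=\lambda^\ast(f)$. Indeed, solving $\lambda(a,f,\Omega)=\lambda^\ast(f)$ for $a$ gives exactly \eqref{formula1}. Lemma~\ref{smart}(ii) makes this solution unique, and Lemma~\ref{lem:lambda-equality}(ii) makes it positive.

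Then, by \eqref{chara},
\[
u_{a^\ast}(x)=u_f(x)-\lambda^\ast(f)\,e(x)=e(x)\Bigl(\frac{u_f(x)}{e(x)}-\lambda^\ast(f)\Bigr),\qquad x\in\Omega .
\]
Hypothesis \eqref{zonamorta} says the bracket vanishes for every $x\in\Omega_0$, so $u_{a^\ast}\equiv 0$ on the open set $\Omega_0$. The same identity together with the definition \eqref{star} gives $u_{a^\ast}\ge 0$ on all of $\Omega$, so $\Omega_0$ is a region where the nonnegative critical solution vanishes identically, i.e.\ a dead zone.

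The only delicate point is matching the definition of dead zone recalled in the introduction, namely an open set compactly contained in $\Omega$. If this is required, I would pass to any open $\Omega_0'\subset\subset\Omega_0$, or observe that $u_{a^\ast}$ vanishes on $\Omega_0\cap\Omega$ regardless.

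It is also worth noting a consistency check. Since $u_{a^\ast}\in W^{2,p}$ vanishes on the open set $\Omega_0$, we get $-\Delta u_{a^\ast}=0$ a.e.\ there, which forces $f=a^\ast\int_\Omega u_{a^\ast}\,dy=\lambda^\ast(f)$ a.e.\ in $\Omega_0$ (using $-\Delta e=1$). So the hypothesis implicitly requires $f$ to equal the constant $\lambda^\ast(f)$ on $\Omega_0$. This is not needed for the proof, but it confirms there is no contradiction with $f\in\mathcal{A}$.

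I expect no real obstacle. The main step is simply invoking $\lambda(a^\ast,f,\Omega)=\lambda^\ast(f)$, which is already contained in \eqref{ok4} and Corollary~\ref{cor:a-star-formula}, or equivalently in the converse part of Theorem~\ref{teo:interior}(ii) applied at every $x_0\in\Omega_0$.
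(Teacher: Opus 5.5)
Your proposal is correct and is essentially the paper's own proof. Both arguments use Lemma~\ref{ok} together with Theorem~\ref{teo:interior} (or equivalently \eqref{formula1}) to write $u_{a^\ast}=u_f-\lambda^\ast(f)\,e$, and then read off $u_{a^\ast}\equiv 0$ on $\Omega_0$ directly from \eqref{zonamorta}. Citing the converse part of Theorem~\ref{teo:interior}(ii) at a point of $\Omega_0$ is slightly more self-contained than going through Corollary~\ref{cor:a-star-formula}, since that corollary relies on the external result of \cite{BahiaPimentaSantos}.
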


\begin{proof}
By Theorem \ref{teo:interior} and \eqref{chara},
\[
u_{a^\ast}=u_f-\lambda^\ast(f)e.
\]
Hence \eqref{zonamorta} gives $u_{a^\ast}(x)=0$ for every $x\in\Omega_0$.
\end{proof}

The authors of \cite{BahiaPimentaSantos} showed that if $\Omega_0$ is a dead zone of $u_{a^\ast}$, then $f$ is constant in $\Omega_0$ (see Lemma 5.1). However, this is not a sufficient condition for the existence of dead zones (see the example in Subsection 5.1 of \cite{BahiaPimentaSantos}). Notice that assumption \eqref{zonamorta} is stronger than simply assuming that $f$ is constant, and it implies
$$
f(x)=\lambda^\ast(f), \ \mbox{for all} \ x\in\Omega_0.
$$

\section{Examples and Applications}\label{exam}

A further consequence of Theorem \ref{teo:interior} and Corollary \ref{corol:main} is that they allow us to determine precisely which type of degeneracy occurs at the critical level, whether it takes place on the boundary or within $\Omega$. This criterion considerably simplifies the construction of concrete examples illustrating each possible scenario of lack of the maximum principle. In what follows, we shall present several such examples as direct applications of Theorem \ref{teo:interior} and Corollary \ref{corol:main}.

\medskip

\subsection{One-dimensional case}

Consider the interval $\Omega = (0,1)$. Then, we have that
$$
e(x) = \frac{1}{2}x(1-x)
$$
and 
$$
e_x(0) = \frac{1}{2} \quad \hbox{and}\quad   e_x(1)=-{1\over2}.
$$

\subsubsection{Example 1: Lack of maximum principle in the interior and on the boundary}
Consider $f(x) = \sin^2(2\pi x)$. The solution $u_f$ is given by:
\[
    u_f(x) 
    = \frac{1}{2}e(x) + \frac{1}{16\pi^2}\sin^2(2\pi x).
\]
The ratio $\Phi(x) := \frac{u_f(x)}{e(x)}$, for $x\in \Omega$, is given by:
\[
    \Phi(x) 
    = \frac{1}{2} + \frac{\sin^2(2\pi x)}{8\pi^2 x(1-x)}.
\]

Since $\sin^2(2\pi x) \ge 0$ and $x(1-x) > 0$ for $x \in (0,1)$, it follows that $\Phi(x) \geq \frac{1}{2}$. At the interior point $x = \frac{1}{2}$, the sine term vanishes, yielding:
\[
    \Phi\left(\frac{1}{2}\right) 
    = \frac{1}{2}.
\]
Hence, $x = \frac{1}{2}$ is an interior minimum point.

To investigate the behavior at the boundary, 
we obtain:
\[
    \lim_{x \to 0^+} \Phi(x)
    = \frac{1}{2}
\quad \hbox{and}\quad
    \lim_{x \to 1^-} \Phi(x) 
    = \frac{1}{2}.
\]
This confirms that the minimum value $\lambda^*(f) = \frac{1}{2}$ is attained both at the interior and at the boundary of the domain. In particular, we have $\lambda^*(f)=\lambda^{\ast\ast}(f)$. 

By  item $(i)$ of Corollary \ref{corol:main},
we get that $u_{a^\ast}$ vanishes at some point of $\Omega$ and $\partial_{\eta}u_{a^\ast}$ vanishes at some point of $\partial\Omega$. Indeed, we have that 
$$
    u_{a^*}(x)=e(x)\left(\Phi(x) - \frac{1}{2}\right)
$$
and
$$
    u_{a^*,x}(x)=e(x)\Phi_x(x)+e_x(x)\left(\Phi(x) - \frac{1}{2}\right).
$$
Then, we have
$ u_{a^*}(1/2)=0$
and     
$
    \partial_\eta u_{a^*}(0)=0
$
and 
$
    \partial_\eta u_{a^*}(1)=0.
$

This is an example where the lack of maximum principle happens in the interior and the Hopf Lemmma does not hold on the boundary (Figure \ref{FIG1}).
 
\begin{figure}[h]
    \centering
    \includegraphics[scale=0.35
    ]{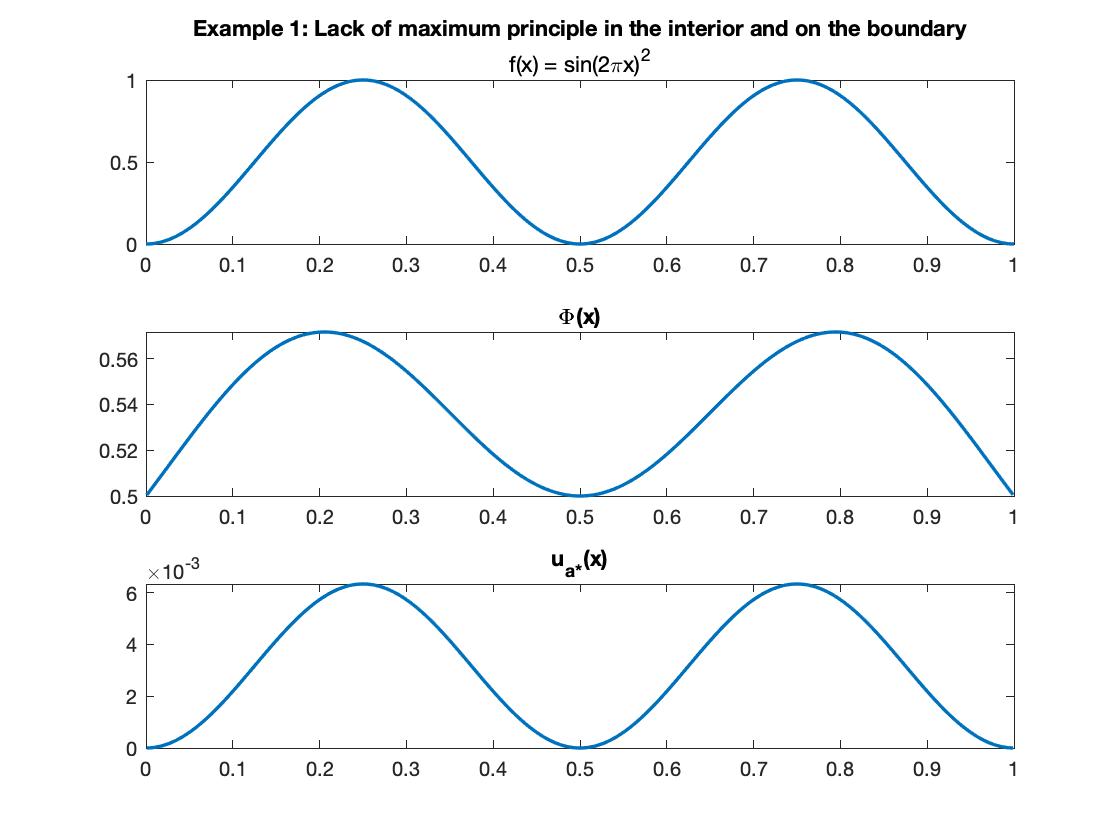}
    \caption{Behavior of $f(x)$, $\Phi(x)$, and $u_{a^*}(x)$ in Example 1.}\label{FIG1}
\end{figure}

\subsubsection{Example 2: Lack of maximum principle  on the boundary}
Let $f(x) = x(1-x)$. The solution $u_f$ is given by:
\begin{equation}
    u_f(x) = \frac{1}{6}e(x)[1+2e(x)]
\end{equation}
The ratio $\Phi(x) = \frac{u_f(x)}{e(x)}$ simplifies to:
\begin{equation}
    \Phi(x) = \frac{1}{6}[1+2e(x)].
\end{equation}
Note that $\Phi'(x) = \frac{1}{6}(1-2x)$. The unique critical point $x={1\over2}$ is a global maximum. Therefore, the minimum of the ratio is attained at the boundaries:
\begin{equation}
    \lambda^{\ast}(f) =\lambda^{\ast\ast}(f)= \frac{1}{6}.
\end{equation}
This confirms that the infimum value $\lambda^*(f) = \frac{1}{6}$ is attained only at the boundary of the domain and not in the interior of the domain. 
By  item $(iii)$ of Corollary \ref{corol:main},
we get that the solution $u_{a^\ast}$ is positive in $\Omega$ and $\partial_{\eta}u_{a^\ast}$ vanishes at some point of $\partial\Omega$. Indeed, we have that 
$$
    u_{a^*}(x)=e(x)\left(\Phi(x) - \frac{1}{6}\right)={1\over3}e^2(x)\quad
\hbox{and}\quad 
    u_{a^*,x}(x)={2\over3}e(x)e_x(x).
$$
Therefore, we obtain $u_{a^*}(x)>0$, for all $x\in(0,1)$, and $    \partial_\eta u_{a^*}(0)=
    \partial_\eta u_{a^*}(1)=0$.

In this example the lack of maximum principle occurs only on the boundary (Figure \ref{FIG2}).

\begin{figure}[h]
    \centering\includegraphics[scale=0.35]{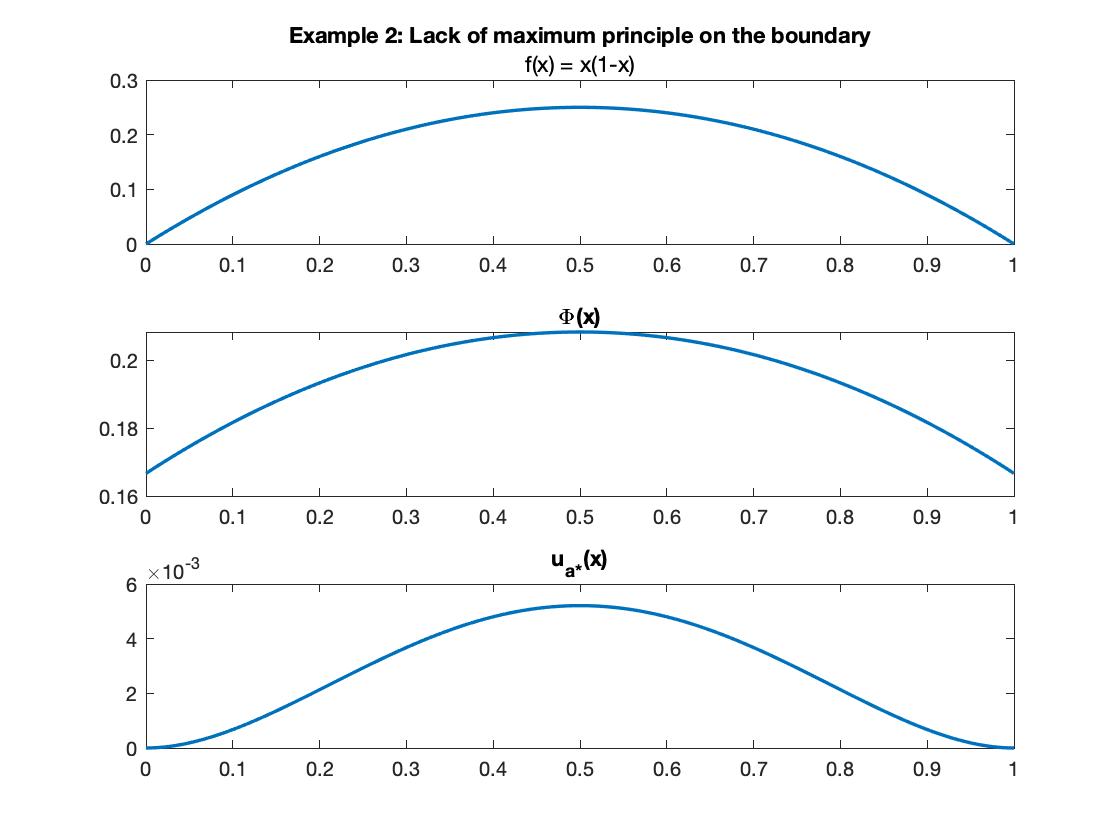}
    \caption{Behavior of $f(x)$, $\Phi(x)$, and $u_{a^*}(x)$ in Example 2.}\label{FIG2}
\end{figure}

\subsubsection{Example 3: Lack of maximum principle  in the interior}
   Consider the nonnegative source term $f(x) = \left(\frac{1}{2}-x\right)^2$. Then, we have that
$$
    u_f(x)=\frac{e(x)[1-4e(x)]}{12}
$$
and we obtain the ratio $ \displaystyle\Phi(x)={u_f(x)\over e(x)}$ is given by
\begin{equation}
    \Phi(x) = \frac{1}{12}[1-4e(x)], \quad \forall x \in (0,1).
\end{equation}
Thus, the minimum of $ \Phi$ is attained at $x = \frac{1}{2}$, where $\Phi(1/2) = \frac{1}{24}$. Then, $\lambda^*(f)$ is attained at $x=1/2$ and $\lambda^*(f)=\frac{1}{24}$
and, since  $\Phi(0)=\Phi(1)=\frac{1}{12}$, we have that $\lambda^{**}(f)=\frac{1}{12}$.
Therefore, since $\lambda^*(f)<\lambda^{**}(f)$, by item $(ii)$ of Corollary \ref{corol:main}, we have that $u_{a^\ast}$ vanishes at some point in $\Omega$, and $\partial_\eta u_{a^\ast} < 0$ on $\partial\Omega$. Indeed, we have that
$$
    u_{a^*}(x)=e(x)\left(\Phi(x) - \frac{1}{24}\right)
\quad\hbox{and}\quad
    u_{a^*,x}(x)=e(x)\Phi_x(x)+e_x(x)\left(\Phi(x) - \frac{1}{24}\right).
$$
Therefore, $ u_{a^*}(1/2)=0$ and
$$
    u_{a^*,x}(0)={1\over48} \quad \hbox{and}\quad   u_{a^*,x}(1)=-{1\over48}.
$$

In this example, the lack of maximum principle happens only in the interior (Figure \ref{FIG3}). 

\begin{figure}[h]
    \centering\includegraphics[scale=0.35]{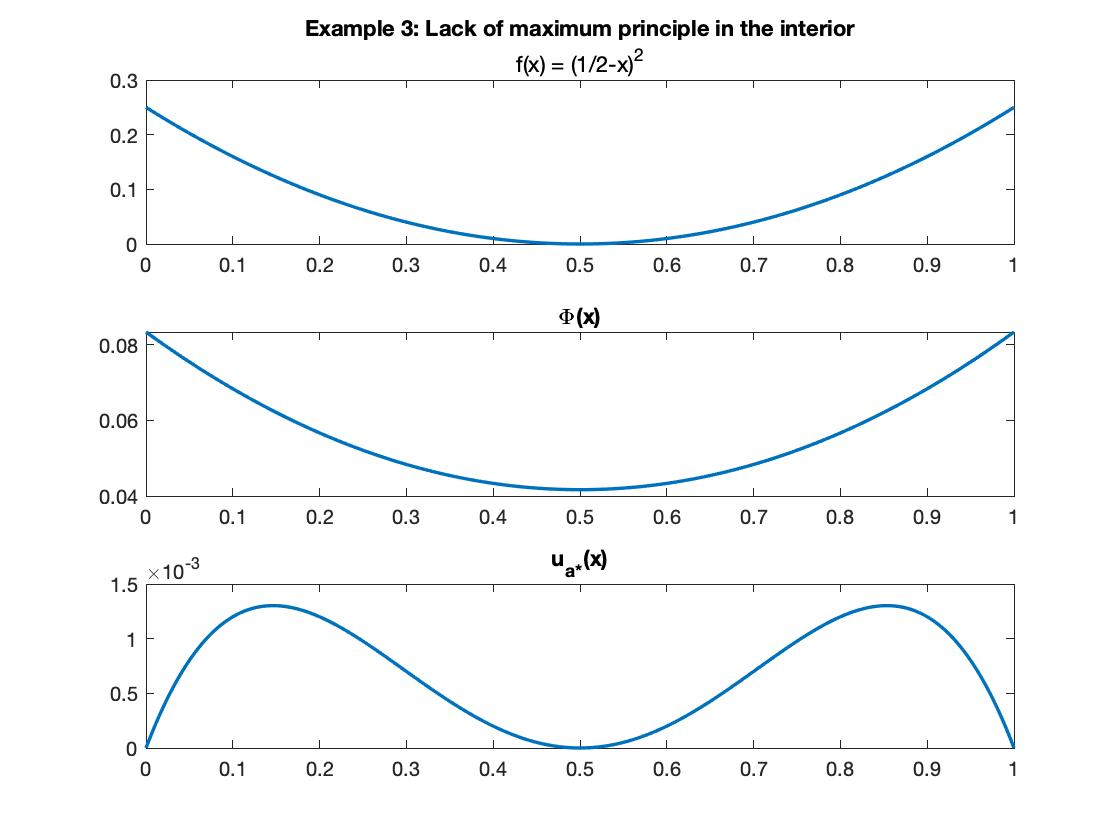}
    \caption{Behavior of $f(x)$, $\Phi(x)$, and $u_{a^*}(x)$ in Example 3.}\label{FIG3}
\end{figure}

\subsubsection{Example 4: Lack of maximum principle with an open dead zone in the interior}
Let $f$ be the piecewise continuous, with two jump discontinuities, positive source term defined by:
\begin{equation}
f(x) = \begin{cases} \frac{11}{9} - x & x \in [0, 1/3) \\ 1 & x \in [1/3, 2/3] \\ x + \frac{2}{9} & x \in (2/3, 1]. \end{cases}
\end{equation}
The solution $u_f$ to the Dirichlet problem is:
\begin{equation}
u_f(x) = \begin{cases} \frac{1}{6}x^3 - \frac{11}{18}x^2 + \frac{14}{27}x & x \in [0, 1/3], \\ e(x) & x \in [1/3, 2/3], \\ \frac{1}{6}(1-x)^3 - \frac{11}{18}(1-x)^2 + \frac{14}{27}(1-x) & x \in [2/3, 1]. \end{cases}
\end{equation} 

It is not difficult to show that $u_f \in H_0^1(0,1)$ and 
%
that $u_f$ is not a classical solution of the problem but a strong solution. In fact, we can prove that $u_f\in W^{2,\infty}(0,1)$.

Let us now analyze the ratio $\Phi(x)={u_f(x)\over e(x)}$. First of all, we have that $\Phi(x) = 1$ for all $x \in [1/3, 2/3]$. On the other hand, on the interval $(0, 1/3)$, we define the difference function $g(x)$:
\begin{equation}
g(x) = u_f(x) - e(x) =
\frac{x}{54} (3x - 1)^2.
\end{equation}
Since $x > 0$ and $(3x-1)^2 > 0$ for all $x \in (0, 1/3)$, it follows that:
\[
g(x) > 0
\]
and then
\[
\Phi(x) > 1,\quad \forall x\in (0,1/3).
\]

By symmetry, we also have
$\Phi(x) > 1,\quad \forall x\in (2/3,1).$

Finally, at the boundaries, the limits are:
\begin{equation}
    \lim_{x \to 0^+} \Phi(x) = \lim_{x \to 1^-} \Phi(x) = \frac{28}{27} > 1.
\end{equation}

Thus, the global minimum $\lambda^*(f) = 1$ is attained on the entire interior sub-interval $[1/3, 2/3]$ and, since  $\Phi(0)=\Phi(1)=\frac{28}{27}$, we have that $\lambda^{**}(f)=\frac{28}{27}$.
Therefore,  $\lambda^{\ast}(f)<\lambda^{**}(f)$ and,  by item $(ii)$ of Corollary \ref{corol:main}, we have that $u_{a^\ast}$ vanishes at some point of $\Omega$ and $\partial_{\eta}u_{a^\ast}$ is negative on $\partial\Omega$. Indeed, we have that
$$
    u_{a^*}(x)=e(x)\left(\Phi(x) - 1\right)
$$
and then
$$
    u_{a^*,x}(x)=e(x)\Phi_x(x)+e_x(x)\left(\Phi(x) - 1\right).
$$
Therefore, $ u_{a^*}(x)=0$ for any $x\in [1/3,2/3]$ and
$$
    u_{a^*,x}(0)={1\over54} \quad \hbox{and}\quad   u_{a^*,x}(1)=-{1\over54}.
$$

This is also an example for Corollary \ref{suficiente}, where the lack of maximum principle happens through the emergence of a dead zone (Figure \ref{FIG4}). 

\begin{figure}[h]\centering\includegraphics[scale=0.35]{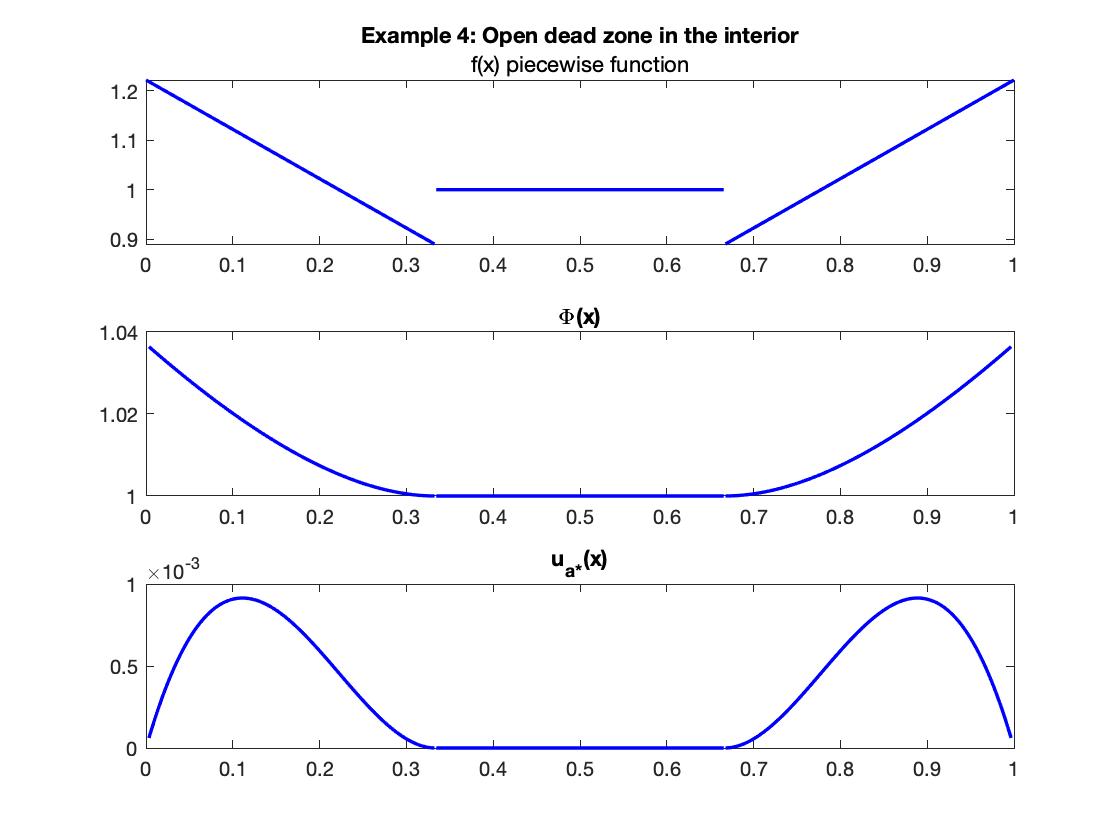}
    \caption{Behavior of $f(x)$, $\Phi(x)$, and $u_{a^*}(x)$ in Example 4.}\label{FIG4}
\end{figure}

\subsection{Two-dimentional case}

We consider the domain $\Omega = B_1(0) \subset \mathbb{R}^2$. The Laplace operator in polar coordinates is: $\Delta_{r,\theta}= \partial_r^2 + {1\over r}\partial_r+{1\over r^2}\partial_\theta^2$. Let $e(\mathbf{x})$ be the solution to the torsion problem 
$$
    -\Delta e = 1 \quad \text{in } \Omega,\qquad e = 0 \quad \text{on } \partial\Omega,
$$
 which is  radial and given by:
\[
e(r) = \frac{1 - r^2}{4}, \quad \text{where } r = |\mathbf{x}|
\]
and
\[
\nabla e(\mathbf{x}) = -{1\over2}\mathbf{x}.
\]

We define a strictly positive, bounded, and radially symmetric source term $f(r)$ defined by:
\begin{equation}
f(r) = \begin{cases} 
1 & r \in [0, 1/2) \\[0.5em]
\frac{9}{2}r - 3 + \frac{5}{8r} & r \in [1/2, 1]. 
\end{cases}
\end{equation}
The solution $u_f$ to the Dirichlet problem $-\Delta u = f$ in $\Omega$ with $u=0$ on $\partial\Omega$ is:
\begin{equation}
u_f(r) = \begin{cases} 
e(r) & r \in [0, 1/2], \\[0.5em]
-\frac{1}{2}r^3 + \frac{3}{4}r^2 - \frac{5}{8}r + \frac{3}{8} & r \in (1/2, 1]. 
\end{cases}
\end{equation} 

It is not difficult to prove that $u_f \in H_0^1(\Omega)$, 
%
%
In this case, $u_f$ is not a classical solution of the problem but a strong solution.

Let us now analyze the ratio $\Phi(\mathbf{x}) = \frac{u_f(\mathbf{x})}{e(\mathbf{x})}$. First of all, we have that $\Phi(r) = 1$ for all $r \in [0, 1/2]$. 
On the other hand, we define the difference function $g(r)$:
\begin{equation}
g(r) = u_f(r) - e(r) 
= \frac{1}{2}\left(r - \frac{1}{2}\right)^2 (1 - r).
\end{equation}
Since $r < 1$ and $(r - 1/2)^2 > 0$ for all $r \in (1/2, 1)$, it follows that:
\[
g(r) > 0 
\]
and then
$$
\Phi(r) > 1, \quad \forall r \in (1/2, 1).
$$

Finally, at the boundary $\partial\Omega$ ($\{r = 1\}$), the limit is:
\begin{equation}
    \lim_{r \to 1^-} \Phi(r) = \frac{u_f'(1)}{e'(1)} = \frac{-5/8}{-1/2} = \frac{5}{4} > 1.
\end{equation}

Thus, the global minimum $\lambda^*(f) = 1$ is attained on the entire interior closed ball $\overline{B_{1/2}(0)}$ and, since $\Phi|_{\partial\Omega} = \frac{5}{4}$, we have that $\lambda^{**}(f) = \frac{5}{4}$.
Therefore, $\lambda^{\ast}(f) < \lambda^{**}(f)$ and, by item $(ii)$ of Corollary \ref{corol:main}, we have that $u_{a^\ast}$ vanishes at some point of $\Omega$ and $\partial_{\eta}u_{a^\ast}$ is negative on $\partial\Omega$. Indeed, we have that
\[
    u_{a^*}(\mathbf{x}) = e(\mathbf{x})\left(\Phi(\mathbf{x}) - 1\right)
\]
and
$$
    \nabla u_{a^*}(\mathbf{x}) = e(\mathbf{x})\nabla \Phi(\mathbf{x})+\nabla e(\mathbf{x})\left(\Phi(\mathbf{x}) - 1\right).
$$
 Therefore, $u_{a^*}(\mathbf{x}) = 0$ for any $\mathbf{x} \in \overline{B_{1/2}(0)}$, while on the boundary $\partial\Omega$:
\[
    \partial_{\eta} u_{a^*}\Big|_{\partial\Omega}= -\frac{1}{8}.
\]
Here, notice that the outward unit normal vector at a boundary point $x\in \partial\Omega$ is  $\eta(x)=x$.

This is a two-dimensional example for Corollary \ref{suficiente}, where the lack of maximum principle happens through the emergence of an open dead zone $\overline{B_{1/2}(0)}$, (Figure \ref{FIG5}).
\begin{figure}[h]\centering\includegraphics[scale=0.2]{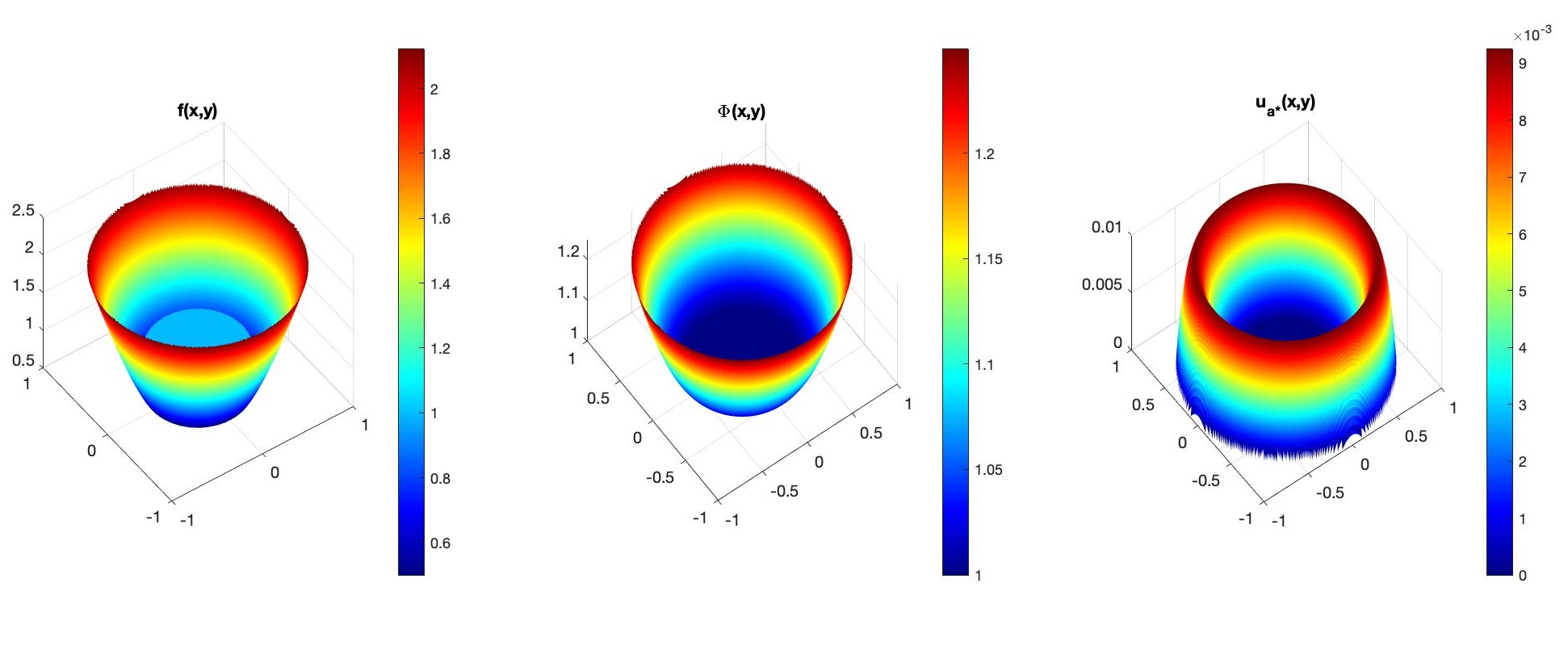}
    \caption{Behavior of $f(x)$, $\Phi(x)$, and $u_{a^*}(x)$ in two-dimentional case.}\label{FIG5}
\end{figure}

\section[An optimization problem involving critical parameter]{An optimization problem involving \texorpdfstring{$a^*$}{a*}}\label{otimiz}

In a biological context, the presence of dead-core solutions to the problem
\begin{equation}\label{eq:P}
\left\{
\begin{array}{lll}
-\Delta u= f-a^{\ast}(f) \displaystyle\int_\Omega u_{a^\ast} \,dy, &in& \Omega,\\[1.2em]
u = 0, & on& \partial\Omega,
\end{array}
\right.
\end{equation}
represents the formation of a hostile region inside the habitat. Within a species-preservation framework, it is therefore natural to seek resource profiles that maximize the threshold at which such a region first appears. More precisely, we ask whether the critical level $a^\ast(f)$ admits a maximizer in suitable admissible classes. A larger critical level means that the nonlocal crowding intensity can increase further before strict positivity is lost. This optimization problem motivates the results of this section.

\begin{definition}
We say that a function $f \in \mathcal{A}$ satisfies the interior-contact condition if $u_{a^\ast(f)}(x_f)=0$ for some $x_f\in\Omega$.
\end{definition}

In what follows, we fix $M>0$, $0<C<M|\Omega|$, and $\delta>0$ such that $\Omega_\delta$ is nonempty, and define
\[
\Omega_\delta
:=
\left\{
x\in\Omega:
\operatorname{dist}(x,\partial\Omega)\ge\delta
\right\}.
\]

\subsection{Prescribed total mass}

Under these assumptions, a natural admissible class of resource distributions with prescribed total mass is
\[
\mathcal A_{\delta,C,M}
:=
\left\{
f\in\mathcal A:
\begin{array}{l}
0\le f\le M \quad \text{a.e. in }\Omega,\\[2mm]
\displaystyle\int_\Omega f(y)\,dy=C,\\[2mm]
\exists\,x_f\in\Omega_\delta
\text{ such that }
u_{a^*}(x_f)=0
\end{array}
\right\}.
\]

The upper bound models limited resources, while the last condition ensures, by Theorem \ref{teo:interior}, that contact occurs at a point separated from the boundary.

The following result shows that the fixed-mass constraint, even together with the uniform bound and the interior-contact condition, does not provide a uniform upper bound for the critical threshold:
\[
\sup_{f\in\mathcal A_{\delta,C,M}} a^*(f)=+\infty.
\]
\begin{theorem}\label{ttteo}
Let $c:=C/|\Omega|$. There exists a sequence $\{f_n\}\subset\mathcal A_{\delta,C,M}$ such that
$$
f_n\longrightarrow c
\ \ \text{in} \ L^\infty(\Omega),
$$
$$
\int_{\Omega}u_{a^\ast(f_n)}(y) dy\longrightarrow 0
$$
and
$$
a^\ast(f_n)\longrightarrow+\infty.
$$
\end{theorem}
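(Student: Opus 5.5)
Fix a nonconstant profile $g\in L^\infty(\Omega)$ with zero mean whose associated ratio attains its infimum in the interior, and build $f_n$ as a vanishing perturbation of the constant $c$. Concretely, choose $x_0\in\Omega_\delta$ and a radius $\rho>0$ with $B_\rho(x_0)\subset\Omega_\delta$, and set
\[
f_n:=c+\varepsilon_n g,\qquad \varepsilon_n\to 0^+ ,
\]
where $g\in L^\infty(\Omega)$, $\int_\Omega g\,dy=0$, $g\not\equiv 0$, and $g$ is chosen so that $w_g:=u_g$ satisfies: the ratio $u_g/e$ attains its infimum over $\overline\Omega$ at $x_0$ (and only in $\Omega_\delta$), strictly below its boundary values $\partial_\eta u_g/\partial_\eta e$. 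Since $u_{f_n}=ce+\varepsilon_n u_g$, we get $u_{f_n}/e=c+\varepsilon_n u_g/e$. Hence the minimum points of $u_{f_n}/e$ coincide with those of $u_g/e$ for every $n$, and $\lambda^\ast(f_n)=c+\varepsilon_n m$ with $m:=\min_{\overline\Omega}u_g/e<0$ is attained at $x_0$. By Theorem~\ref{teo:interior}(ii) we get $u_{a^\ast(f_n)}(x_0)=0$ with $x_0\in\Omega_\delta$. Clearly $\int_\Omega f_n=C$, $f_n$ is nonconstant, and $0\le f_n\le M$ for $n$ large because $0<c<M$ and $\varepsilon_n\|g\|_\infty\to0$. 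Thus $f_n\in\mathcal A_{\delta,C,M}$ and $f_n\to c$ in $L^\infty$.

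For the threshold I would use Corollary~\ref{cor:a-star-formula}. Its denominator is
\[
\int_\Omega f_ne\,dy-\lambda^\ast(f_n)\int_\Omega e\,dy
= \varepsilon_n\Big(\int_\Omega ge\,dy-m\int_\Omega e\,dy\Big)=\varepsilon_n\int_\Omega (u_g-me)\,dy ,
\]
using $\int_\Omega ge=\int_\Omega u_g$. The last integral is a fixed positive constant $K$, by Lemma~\ref{lem:lambda-equality}(ii) applied to a nonconstant $f$, or directly since $u_g-me\ge0$ and $u_g-me\not\equiv0$. The numerator tends to $c>0$, so $a^\ast(f_n)\approx c/(\varepsilon_nK)\to+\infty$.

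For the total population, Lemma~\ref{ok} gives $\int_\Omega u_{a^\ast}=\int_\Omega f_ne/(1+a^\ast\int_\Omega e)$. The numerator stays bounded and $a^\ast(f_n)\to\infty$, so this tends to $0$. Equivalently, $u_{a^\ast(f_n)}=\varepsilon_n(u_g-me)$, which is $O(\varepsilon_n)$ uniformly.

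The main obstacle is constructing $g$. We need a zero-mean function whose torsion ratio $u_g/e$ has an interior minimum in $\Omega_\delta$ that lies strictly below the boundary values. My first attempt would be to take $h\in C_c^\infty(B_\rho(x_0))$ with $h\ge0$, $h(x_0)>0$, and $\psi:=-h e$, and to define $g:=-\Delta\psi=\Delta(he)$. Then $u_g=\psi$, since $\psi\in C^\infty$ vanishes on $\partial\Omega$. This gives $\int_\Omega g\,dy=\int_{\partial\Omega}\partial_\eta\psi\,dS=0$ because $\psi$ vanishes near $\partial\Omega$. It also gives $u_g/e=-h$, with minimum $-\max h<0$ attained in $B_\rho(x_0)\subset\Omega_\delta$, and boundary value $0>m$. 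Finally, $g$ is bounded and nonconstant. This construction resolves all the requirements at once, so the remaining verifications are routine.
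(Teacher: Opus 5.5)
Your proof is correct and follows essentially the paper's route. Both arguments perturb the constant $c$ by $\varepsilon_n g$ with $g:=-\Delta(e\,r)$ for a prescribed ratio $r$ that has an interior minimum, and then read off $\lambda^\ast(f_n)$, $u_{a^\ast(f_n)}=\varepsilon_n e\,(r-\min r)$ and $a^\ast(f_n)$ from Theorem~\ref{teo:interior} and \eqref{formula1}. The paper takes $r=|x-x_0|^2-\alpha$ and must tune $\alpha$ through a boundary flux to get zero mean, whereas your compactly supported $r=-h$ makes $\int_\Omega g\,dy=0$ automatic.

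One small adjustment is needed. Since $\Omega_\delta$ is only assumed nonempty, it may have empty interior (e.g.\ $\delta$ equal to the inradius), so you cannot always require $B_\rho(x_0)\subset\Omega_\delta$. Instead take $h\in C_c^\infty(\Omega)$, $h\ge0$, with a strict global maximum at $x_0$. This puts the contact point exactly at $x_0\in\Omega_\delta$ and also removes your inconsistency between ``attained at $x_0$'' and ``attained somewhere in $B_\rho(x_0)$''; the paper's quadratic gives this for free.
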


\begin{proof}
Fix
$$
x_0\in\Omega_\delta.
$$
We shall construct a function $\varphi$ satisfying
$$
\int_\Omega\varphi\,dy=0
$$
such that the corresponding solution $u_\varphi$ of
$$
\begin{cases}
-\Delta u_\varphi=\varphi & \text{in }\Omega,\\
u_\varphi=0 & \text{on }\partial\Omega
\end{cases}
$$
has the property that
$$
\frac{u_\varphi}{e}
$$
has a strict global minimum at $x_0$. To do this, set
$$
q(x):=|x-x_0|^2.
$$
We look for $u_\varphi$ in the form
$$
u_\varphi(x)=e(x)\big(q(x)-\alpha\big),
$$
where $\alpha\in\mathbb R$ will be chosen so that
$$
\int_\Omega\varphi\,dy=0.
$$

Clearly,
$$
u_\varphi=0
\qquad\text{on }\partial\Omega,
$$
because $e=0$ on $\partial\Omega$. Moreover,
$$
\frac{u_\varphi(x)}{e(x)}
=
q(x)-\alpha
\qquad\text{for every }x\in\Omega.
$$
Consequently,
$$
\frac{u_\varphi(x_0)}{e(x_0)}
=-\alpha,
$$
whereas, for every $x\neq x_0$,
$$
\frac{u_\varphi(x)}{e(x)}
=
q(x)-\alpha
>
-\alpha
=
\frac{u_\varphi(x_0)}{e(x_0)}.
$$
Thus $x_0$ is the unique global minimizer of $u_\varphi/e$.

It remains to choose $\alpha$ so that
$$
\int_\Omega\varphi\,dy=0.
$$

Define
$$
\varphi:=-\Delta u_\varphi.
$$
Since $\Omega$ is smooth and $e$ is the torsion function, $\varphi\in L^\infty(\Omega)$.
By the divergence theorem,
$$
\int_\Omega\varphi\,dy
=
-\int_\Omega\Delta u_\varphi\,dy
=
-\int_{\partial\Omega}\partial_\nu u_\varphi\,dS.
$$
Since $e=0$ on $\partial\Omega$,
$$
\partial_\nu u_\varphi
=
(q-\alpha)\partial_\nu e
\qquad\text{on }\partial\Omega.
$$
Therefore,
$$
\int_\Omega\varphi\,dy
=
-\int_{\partial\Omega}
(q-\alpha)\partial_\nu e\,dS.
$$

We choose
$$
\alpha
:=
\frac{
\displaystyle\int_{\partial\Omega}
q(x)(-\partial_\nu e)\,dS
}{
\displaystyle\int_{\partial\Omega}
(-\partial_\nu e)\,dS
}.
$$
Since
$$
-\partial_\nu e>0
\qquad\text{on }\partial\Omega,
$$
the denominator is strictly positive. With this choice,
$$
\begin{aligned}
\int_\Omega\varphi\,dy
&=
\int_{\partial\Omega}
\big(q(x)-\alpha\big)(-\partial_\nu e)\,dS =0.
\end{aligned}
$$
Hence
$$
\int_\Omega\varphi\,dy=0.
$$

Notice also that $\varphi$ is not constant. Indeed, if $\varphi=K$, then $u_\varphi$ would solve
$$
-\Delta u_\varphi=K
\quad\text{in }\Omega,
\qquad
u_\varphi=0
\quad\text{on }\partial\Omega,
$$
and therefore $u_\varphi=Ke$. Since $e>0$ in $\Omega$, this would imply
$$
q(x)-\alpha=K
\qquad\text{in }\Omega,
$$
or equivalently
$$
q(x)=K+\alpha
\qquad\text{in }\Omega,
$$
which is impossible. Hence $\varphi$ is nonconstant.

Now define
$$
f_n:=c+\frac1n\varphi.
$$

Since
$$
\int_\Omega\varphi\,dy=0,
$$
we have
$$
\int_\Omega f_n\,dy
=
c|\Omega|+\frac1n\int_\Omega\varphi\,dy
=
C.
$$

Moreover,
$$
\|f_n-c\|_{L^\infty(\Omega)}
=
\frac1n\|\varphi\|_{L^\infty(\Omega)}
\longrightarrow0.
$$
Thus
$$
f_n\longrightarrow c
\qquad\text{strongly in }L^\infty(\Omega).
$$

Since $M>c$, choose $n_0$ sufficiently large such that
$$
\frac1n\|\varphi\|_{L^\infty(\Omega)}
\leq
\min\{c,M-c\}
\qquad\forall n\geq n_0.
$$
Then, for $n\geq n_0$,
$$
0\leq f_n\leq M
\qquad\text{in }\Omega.
$$
Since $\varphi$ is nonconstant, $f_n$ is nonconstant. Hence
$$
f_n\in\mathcal A
\qquad\forall n\geq n_0.
$$

It remains to verify the interior-contact condition.

Let $u_{f_n}$ denote the solution of
$$
\begin{cases}
-\Delta u_{f_n}=f_n & \text{in }\Omega,\\
u_{f_n}=0 & \text{on }\partial\Omega.
\end{cases}
$$
By linearity,
$$
-\Delta\left(ce+\frac1n u_\varphi\right)
=
c+\frac1n\varphi
=
f_n.
$$
Since both $e$ and $u_\varphi$ vanish on $\partial\Omega$, uniqueness gives
$$
u_{f_n}
=
ce+\frac1n u_\varphi.
$$

Consequently,
$$
\frac{u_{f_n}(x)}{e(x)}
=
c+\frac1n\frac{u_\varphi(x)}{e(x)}
=
c+\frac1n\big(q(x)-\alpha\big).
$$

Therefore,
$$
\frac{u_{f_n}(x_0)}{e(x_0)}
=
c-\frac{\alpha}{n},
$$
while, for every $x\neq x_0$,
$$
\frac{u_{f_n}(x)}{e(x)}
=
c-\frac{\alpha}{n}
+\frac{q(x)}{n}
>
c-\frac{\alpha}{n}.
$$
Hence
$$
\lambda^*(f_n)
=
\inf_{x\in\Omega}
\frac{u_{f_n}(x)}{e(x)}
=
\frac{u_{f_n}(x_0)}{e(x_0)}.
$$

Since
$$
x_0\in\Omega_\delta,
$$
Theorem \ref{teo:interior} gives
\[
u_{a^*(f_n)}
=u_{f_n}-\lambda^*(f_n)e
=\frac1n e q.
\]
Thus the interior-contact condition is satisfied, $u_{a^*(f_n)}(x_0)=0$, and
\[
\int_\Omega u_{a^*(f_n)}\,dy
=\frac1n\int_\Omega e(y)|y-x_0|^2\,dy
\longrightarrow0.
\]
Therefore
$$
f_n\in\mathcal A_{\delta,C,M}
\qquad\forall n\geq n_0.
$$

Since
 $$
   \lambda^*(f_n)=c-\frac{\alpha}{n}, 
 $$
formula \eqref{formula1} yields
  $$
    a^*(f_n)
    =\frac{c-\frac{\alpha}{n}}
    {\displaystyle \frac{1}{n}\int_\Omega e(y)|y-x_0|^2\,dy}
    \longrightarrow +\infty.
 $$
This completes the proof.
\end{proof}

\subsection{Prescribed total population}\label{Prtotpop}

The discussion of the previous subsection leads us to consider the maximization problem under a prescribed lower bound for the total population at the critical threshold. To distinguish this constraint from the fixed-mass constraint, define
\[
\mathcal A^{\mathrm{pop}}_{\delta,C,M}
:=
\left\{
f\in \mathcal{A}:
\begin{array}{l}
0\le f\le M \text{ a.e. in }\Omega,\\[2mm]
\displaystyle\int_\Omega u_{a^\ast}(x) dy\geq C ,\\[2mm]
\exists x_f\in\Omega_\delta
\text{ such that }u_{a^\ast}(x_f)=0
\end{array}
\right\}.
\]
Notice that the condition $\int_\Omega u_{a^*} dy \geq C$ ensures that the total population at the critical threshold remains above a minimum viable level.


We first give a condition ensuring that this class is nonempty.
\begin{proposition}\label{prop:nonempty_M1}
Fix $x_0 \in \Omega_\delta$ and define the geometric constant:
\[
C_0(\Omega,\delta,x_0,M)
:=
\frac{M\displaystyle\int_\Omega e(x)|x-x_0|^2\,dx}
{2\|\varphi\|_{L^\infty(\Omega)}}>0,
\]
where $\varphi$ is the zero-mean function constructed in Theorem~\ref{ttteo}. Then, for any $C \in (0,C_0(\Omega,\delta,x_0,M)]$, the admissible class $\mathcal A^{\mathrm{pop}}_{\delta,C,M}$ is nonempty.
\end{proposition}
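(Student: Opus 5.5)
We will produce an explicit member of $\mathcal A^{\mathrm{pop}}_{\delta,C,M}$ by reusing the construction in the proof of Theorem~\ref{ttteo}. There we obtained, for the fixed point $x_0\in\Omega_\delta$, a bounded nonconstant function $\varphi$ with zero mean such that
\[
u_\varphi=e\,(q-\alpha), \qquad q(x)=|x-x_0|^2 .
\]
This time, instead of letting a perturbation parameter tend to zero, we take a constant background plus a large multiple of $\varphi$:
\[
f_t:=\frac{M}{2}+t\varphi, \qquad t:=\frac{M}{2\|\varphi\|_{L^\infty(\Omega)}} .
\]

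The first step is to check that $f_t\in\mathcal A$ and $0\le f_t\le M$. Since $|t\varphi|\le M/2$ a.e., we get $0\le f_t\le M$. Since $\varphi$ is nonconstant, so is $f_t$.

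The second step is to compute the solution and the ratio. By linearity and uniqueness of the Dirichlet problem,
\[
u_{f_t}=\frac{M}{2}\,e+t\,u_\varphi .
\]
Hence
\[
\frac{u_{f_t}}{e}=\frac{M}{2}-t\alpha+t\,q .
\]
This ratio has a strict global minimum at $x_0\in\Omega_\delta$, so
\[
\lambda^*(f_t)=\frac{M}{2}-t\alpha,
\]
and the infimum is attained at an interior point. Theorem~\ref{teo:interior}(ii) then gives $u_{a^*(f_t)}(x_0)=0$, which is the interior-contact condition with $x_f=x_0\in\Omega_\delta$. Moreover, by the proof of Theorem~\ref{teo:interior}(ii), $\lambda(a^*,f_t,\Omega)=\lambda^*(f_t)$. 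So Lemma~\ref{ok} yields
\[
u_{a^*(f_t)}=u_{f_t}-\lambda^*(f_t)\,e=t\,e\,q .
\]

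The third step is the population bound. From the previous formula,
\[
\int_\Omega u_{a^*(f_t)}\,dy=t\int_\Omega e(y)|y-x_0|^2\,dy=C_0(\Omega,\delta,x_0,M)\ \ge\ C
\]
for every $C\in(0,C_0]$. Positivity of $C_0$ follows from $e>0$ in $\Omega$, and $\varphi\not\equiv0$ ensures $\|\varphi\|_\infty>0$.

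The only delicate point is that $\lambda^*(f_t)$ need not be positive. Positivity of $\lambda^*$ was justified earlier only for $f\ge0$ nontrivial via Hopf's lemma, but here $f_t\ge0$ and $f_t\not\equiv0$, so that argument still applies. The formula for $u_{a^*}$ relies only on Theorem~\ref{teo:interior}, which holds for all of $\mathcal A$. So I expect no real obstacle beyond confirming that $f_t$ is genuinely nonnegative, which is exactly what the factor $2\|\varphi\|_\infty$ in $C_0$ is designed for.
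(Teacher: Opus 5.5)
Your argument is correct and is essentially the paper's proof. Your $f_t=t\bigl(\|\varphi\|_{L^\infty(\Omega)}+\varphi\bigr)$ is the paper's $f=A\bigl(\|\varphi\|_{L^\infty(\Omega)}+\varphi\bigr)$ with the scaling fixed at its largest admissible value $A=M/(2\|\varphi\|_{L^\infty(\Omega)})$; the paper instead takes $A=C/\int_\Omega e|x-x_0|^2\,dx$ to get $\int_\Omega u_{a^*}\,dy=C$ exactly, while your single choice gives $C_0\ge C$ and so covers every $C\in(0,C_0]$ at once.
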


\begin{proof}
Fix $x_0 \in \Omega_\delta$ and $C \in (0,C_0(\Omega,\delta,x_0,M)]$. Let $q(x) := |x - x_0|^2$ and let $u_\varphi(x) = e(x)(q(x) - \alpha)$ be the unique solution to
\[
\begin{cases} 
-\Delta u_\varphi = \varphi & \text{in } \Omega,\\ 
u_\varphi = 0 & \text{on } \partial\Omega, 
\end{cases}
\]
where $\alpha \in \mathbb{R}$ is chosen such that $\displaystyle\int_\Omega \varphi \,dx = 0$.

Define the scaling parameter $A > 0$ by
\[
A := \frac{C}{\displaystyle\int_\Omega e(x) |x - x_0|^2 \,dx}.
\]
Since $C \le C_0(\Omega,\delta,x_0,M)$, we have the bound
\[
A \le \frac{M}{2\|\varphi\|_{L^\infty(\Omega)}}.
\]

Now, define the source function $f \colon \Omega \to \mathbb{R}$ as
\[
f(x) := A \left( \|\varphi\|_{L^\infty(\Omega)} + \varphi(x) \right).
\]
We claim that $f \in \mathcal A^{\mathrm{pop}}_{\delta,C,M}$ by verifying all required conditions:

\textbf{Step 1: $0 \le f \le M$ a.e. in $\Omega$}\\
    Since $-\|\varphi\|_{L^\infty(\Omega)} \le \varphi(x) \le \|\varphi\|_{L^\infty(\Omega)}$ a.e. in $\Omega$, we immediately have
    \[
    f(x) \ge A \left( \|\varphi\|_{L^\infty(\Omega)} - \|\varphi\|_{L^\infty(\Omega)} \right) = 0,
    \]
    and using the upper bound on $A$,
    \[
    f(x) \le A \left( 2 \|\varphi\|_{L^\infty(\Omega)} \right) \le M.
    \]
    Thus, $0 \le f(x) \le M$ a.e. in $\Omega$.

     \textbf{Step 2: Contact condition in $\Omega_\delta$:}\\
    By linearity of the Laplacian, the solution $u_f$ is:
    \[
    u_f(x) = A \|\varphi\|_{L^\infty(\Omega)} e(x) + A u_\varphi(x) = A e(x) \left( \|\varphi\|_{L^\infty(\Omega)} - \alpha + q(x) \right).
    \]
    Evaluating the ratio $u_f / e$, we obtain
    \[
    \frac{u_f(x)}{e(x)} = A \left( \|\varphi\|_{L^\infty(\Omega)} - \alpha + q(x) \right).
    \]
    Since $q(x) = |x - x_0|^2 \ge 0$ with $q(x_0) = 0$, $x_0 \in \Omega_\delta$ is the unique global minimizer of $u_f/e$, and
    \[
    \lambda^*(f) = \inf_{x\in\Omega} \frac{u_f(x)}{e(x)} = A \left( \|\varphi\|_{L^\infty(\Omega)} - \alpha \right)=\frac{u_f(x_0)}{e(x_0)}.
    \]
    Therefore, the state variable at the critical threshold $a^*(f)$ reads
    \[
    u_{a^*}(x) = u_f(x) - \lambda^*(f) e(x) = A e(x) q(x) = A e(x) |x - x_0|^2.
    \]
    Evaluating at $x_0 \in \Omega_\delta$, we get $u_{a^*}(x_0) = A e(x_0) |x_0 - x_0|^2 = 0$, verifying the contact condition in $\Omega_\delta$.

    \textbf{Step 3: Total population constraint}\\
    Integrating $u_{a^*}(x)$ over $\Omega$ and using the definition of $A$,
    \[
    \int_\Omega u_{a^*}(x) \,dx = A \int_\Omega e(x) |x - x_0|^2 \,dx = \frac{C}{\int_\Omega e(x) |x - x_0|^2 \,dx} \int_\Omega e(x) |x - x_0|^2 \,dx = C.
    \]

Since $f$ is nonconstant (because $\varphi$ is nonconstant), $f \in \mathcal{A}$. Thus $f \in \mathcal A^{\mathrm{pop}}_{\delta,C,M}$, showing that this class is nonempty.
\end{proof}

\begin{remark}
Notice that if $\displaystyle C>M\int_\Omega e\, dx$ then
$A^{\mathrm{pop}}_{\delta,C,M}=\emptyset$.

\end{remark}
The critical parameter is uniformly bounded on the population-constrained class.
\begin{remark}\label{remark_bounded}
Notice that ${a^*(f)} \leq M/C$ for all $f\in \mathcal A^{\mathrm{pop}}_{\delta,C,M}$. Indeed, let $f\in\mathcal A^{\mathrm{pop}}_{\delta,C,M}$. Then
\[
\begin{cases}
-\Delta u_{a^*} + {a^*} \displaystyle\int_\Omega u_{a^*} \,dy = f & \hbox{in}\quad \Omega,\\[1.2em]
u_{a^*} = 0 & \hbox{on}\quad \partial\Omega.
\end{cases}
\]

Multiplying the differential equation by $u_{a^*}$ and integrating by parts, we obtain the energy identity
\[
\displaystyle \int_\Omega|\nabla u_{a^*}|^2 \,dy + {a^*} \displaystyle\left(\int_\Omega u_{a^*} \,dy\right)^2 = \int_\Omega f u_{a^*} \,dy.
\]
Using the fact that $f \leq M$ almost everywhere in $\Omega$, we deduce
\[
\displaystyle \int_\Omega|\nabla u_{a^*}|^2 \,dy + {a^*} \displaystyle\left(\int_\Omega u_{a^*} \,dy\right)^2 \leq M\int_\Omega u_{a^*} \,dy. 
\]
Dropping the nonnegative gradient term and using the total population bound $\displaystyle\int_\Omega u_{a^\ast} \,dy \geq C$, we obtain
\begin{equation}\label{eq:a^*bounded}
{a^*(f)} \leq \frac{M}{\displaystyle\int_\Omega u_{a^*} \,dy} \leq \frac{M}{C}, \quad \forall f\in \mathcal A^{\mathrm{pop}}_{\delta,C,M}.
\end{equation}
\end{remark}

\vspace{0.5cm}

Our main goal in this section is to study the variational problem
\[
\sup_{f\in\mathcal A^{\mathrm{pop}}_{\delta,C,M}}
a^*(f).
\]
Thanks to the uniform upper bound in \eqref{eq:a^*bounded}, the previous supremum is finite. 

\begin{theorem}\label{tteo}
If $\mathcal A^{\mathrm{pop}}_{\delta,C,M}\neq \emptyset $ then the variational problem
\begin{equation}\label{optimala*}
\sup_{f\in\mathcal A^{\mathrm{pop}}_{\delta,C,M}}
a^*(f)
\end{equation}
admits at least one maximizer.
\end{theorem}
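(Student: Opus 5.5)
We use the direct method. Let $S:=\sup_{f\in\mathcal A^{\mathrm{pop}}_{\delta,C,M}}a^*(f)$, which is finite by Remark~\ref{remark_bounded}, and pick a maximizing sequence $\{f_n\}\subset\mathcal A^{\mathrm{pop}}_{\delta,C,M}$ with $a^*(f_n)\to S$. Since $0\le f_n\le M$, we may extract a subsequence with $f_n\rightharpoonup f$ weakly-$*$ in $L^\infty(\Omega)$. The bounds $0\le f\le M$ a.e. pass to the limit. Elliptic $W^{2,p}$ estimates for every $p<\infty$ make $\{u_{f_n}\}$ bounded in $C^{1,\beta}(\overline\Omega)$. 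Hence, after a further subsequence, $u_{f_n}\to u_f$ in $C^1(\overline\Omega)$, where the limit is identified as $u_f$ through the weak formulation. In addition, $\int_\Omega f_ne\,dy\to\int_\Omega fe\,dy$.

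\textbf{Step 1: uniform convergence of the quotients.} We show that $\Phi_n:=u_{f_n}/e\to\Phi:=u_f/e$ uniformly on $\overline\Omega$, using the continuous extensions discussed before Lemma~\ref{lem:lambda-equality}. The Hopf lemma and smoothness of $\Omega$ give $e(x)\ge k\,\mathrm{dist}(x,\partial\Omega)$ for some $k>0$. Any $w\in C^1(\overline\Omega)$ vanishing on $\partial\Omega$ satisfies $|w(x)|\le\|\nabla w\|_\infty\,\mathrm{dist}(x,\partial\Omega)$. Together these give $\|w/e\|_{L^\infty}\le k^{-1}\|w\|_{C^1}$, and we apply this with $w=u_{f_n}-u_f$. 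It follows that $\lambda^*(f_n)=\inf\Phi_n\to\inf\Phi=\lambda^*(f)$.

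\textbf{Step 2: interior contact in $\Omega_\delta$.} Since $f_n$ satisfies the interior-contact condition, the proof of Theorem~\ref{teo:interior}(ii) gives a point $x_n\in\Omega_\delta$ with $\Phi_n(x_n)=\lambda^*(f_n)$. Because $\Omega_\delta$ is compact, we may assume $x_n\to\bar x\in\Omega_\delta$. Uniform convergence then yields $\Phi(\bar x)=\lambda^*(f)$, so the infimum is attained at the interior point $\bar x$. Theorem~\ref{teo:interior}(ii) now gives $u_{a^*(f)}(\bar x)=0$, provided $f\in\mathcal A$.

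\textbf{Step 3: $f$ is nonconstant, and the population constraint holds.} By Lemma~\ref{ok} and \eqref{ok4}, $u_{a^*(f_n)}=u_{f_n}-\lambda^*(f_n)e$. Steps 1 and 2 show this converges uniformly to $u_f-\lambda^*(f)e$. Suppose $f\equiv k$ were constant. Then $u_f=ke$ and $\lambda^*(f)=k$, so $u_{a^*(f_n)}\to0$ uniformly. This contradicts $\int_\Omega u_{a^*(f_n)}\,dy\ge C>0$. Hence $f\in\mathcal A$, which also justifies the use of Theorem~\ref{teo:interior}(ii) in Step 2. The same uniform convergence gives
\[
\int_\Omega u_{a^*(f)}\,dy=\lim_{n\to\infty}\int_\Omega u_{a^*(f_n)}\,dy\ge C.
\]
Therefore $f\in\mathcal A^{\mathrm{pop}}_{\delta,C,M}$.

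\textbf{Step 4: convergence of the critical parameters.} By Corollary~\ref{cor:a-star-formula},
\[
a^*(f_n)=\frac{\lambda^*(f_n)}{\int_\Omega f_ne\,dy-\lambda^*(f_n)\int_\Omega e\,dy}.
\]
Numerator and denominator converge to the corresponding quantities for $f$. The limit denominator is strictly positive by Lemma~\ref{lem:lambda-equality}(ii), since $f\in\mathcal A$. Thus $a^*(f_n)\to a^*(f)$, so $a^*(f)=S$ and $f$ is a maximizer.

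The main obstacle is Step 1: we need convergence of $u_{f_n}/e$ up to the boundary, not just on $\Omega_\delta$, in order to pass the infimum $\lambda^*$ to the limit. Closely tied to it is ruling out a constant limit in Step 3, and this is exactly where the population constraint is used.
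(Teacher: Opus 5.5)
Your proof is correct and follows the paper's route. The shared steps are: weak-$*$ compactness, $C^1(\overline\Omega)$ convergence of $u_{f_n}$, compactness of $\Omega_\delta$ to localize the contact points, the population constraint to rule out a constant limit, and formula \eqref{formula1} to pass $a^*$ to the limit. The one difference is how you obtain $\lambda^*(f_n)\to\lambda^*(f)$: you prove uniform convergence of $u_{f_n}/e$ on all of $\Omega$ via $e\ge k\operatorname{dist}(\cdot,\partial\Omega)$, whereas the paper uses uniform convergence only on $\Omega_\delta$ together with the pointwise inequality $u_{f_n}(x_n)/e(x_n)\le u_{f_n}(y)/e(y)$ for each fixed $y\in\Omega$. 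Your Step 1 is valid, but your closing claim that convergence up to the boundary is necessary is overstated.
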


\begin{proof} 

Let $\{f_n\}\subset\mathcal A^{\mathrm{pop}}_{\delta,C,M}$ be a maximizing sequence for \eqref{optimala*}, namely,
\begin{equation}\label{eq:conv_a}
a^*(f_n)
\to
\sup_{g\in\mathcal A^{\mathrm{pop}}_{\delta,C,M}}
a^*(g)=:\bar{a}.
\end{equation}
 Thanks to Remark \ref{remark_bounded}, we have that
$\bar{a}$ is a positive real number.

Since
\[
0\le f_n\le M,
\]
the sequence is bounded in $L^\infty(\Omega)$. By the Banach--Alaoglu Theorem, there exists
$f\in L^\infty(\Omega)$ such that, up to a subsequence,
\[
f_n\rightharpoonup^\ast f
\qquad\text{in }L^\infty(\Omega).
\]

The bounds are preserved under weak-$*$ convergence. Indeed, for every
$\varphi\in L^1(\Omega)$ with $\varphi\ge0$,
\[
\int_\Omega f\varphi\,dy
=
\lim_{n\to\infty}
\int_\Omega f_n\varphi\,dy
\ge0.
\]
Applying the same argument to $M-f_n$ gives $f\le M$ a.e. Hence
\begin{equation}\label{maximum}
0\le f\le M.
\end{equation}

Let
\[
u_n:=u_{f_n}.
\]
Then
\[
\begin{cases}
-\Delta(u_n-u_f)=f_n-f & \text{in }\Omega,\\
u_n-u_f=0 & \text{on }\partial\Omega.
\end{cases}
\]

Since $\{f_n\}$ is uniformly bounded in $L^\infty(\Omega)$, elliptic regularity yields, for every $p<+\infty$,
\[
\|u_n\|_{W^{2,p}(\Omega)}
\le C_p.
\]
Choosing $p>N$ and using compact Sobolev embedding, a subsequence converges in $C^1(\overline\Omega)$. The weak formulation and the weak-$*$ convergence of $f_n$ identify the limit with $u_f$; hence
\[
u_n\to u_f
\qquad\text{in }C^1(\overline\Omega),
\]
up to a subsequence. Since $\{f_n\}\subset\mathcal A^{\mathrm{pop}}_{\delta,C,M}$, for each $n$ there exists
$x_n\in\Omega_\delta$
such that
\[
\lambda^*(f_n)
=
\frac{u_n(x_n)}{e(x_n)}.
\]
Since $\Omega_\delta$ is compact,
there exists
$x_0\in\Omega_\delta$
such that, up to a subsequence,
\[
x_n\to x_0.
\]
We claim that
\[
\frac{u_f(x_0)}{e(x_0)}=\lambda^*(f).
\]

Indeed, using the uniform convergence of $u_n$ and the continuity of $e$, we obtain
\[
\frac{u_n}{e}\to \frac{u_f}{e}\quad \hbox{uniformly in }\quad \Omega_\delta.
\]
Consequently,
\[
\lambda^*(f_n)
=
\frac{u_n(x_n)}{e(x_n)}
\to
\frac{u_f(x_0)}{e(x_0)}.
\]

On the other hand, for every fixed $y\in\Omega$,
\[
\frac{u_n(x_n)}{e(x_n)}\le \frac{u_n(y)}{e(y)}.
\]
Passing to the limit, we deduce
\[
\frac{u_f(x_0)}{e(x_0)}\le \frac{u_f(y)}{e(y)}
\qquad\forall y\in\Omega.
\]
Therefore,
\[
\lambda^*(f)
=
\inf_\Omega \frac{u_f}{e}
=
\frac{u_f(x_0)}{e(x_0)},
\]
with $x_0\in\Omega_\delta$ and
\begin{equation}\label{conv:lambda}
\lambda^*(f_n)
\to
\lambda^*(f).
\end{equation}

Next, since $e\in L^1(\Omega)$ and
$
f_n \overset{*}{\rightharpoonup} f$ in $L^\infty(\Omega)$,
we have
\[
\int_\Omega f_n e\,dy
\longrightarrow
\int_\Omega fe\,dy.
\]

By Lemma \ref{ok},
\[
u_{a^*(f_n)}=u_n-\lambda^*(f_n)e.
\]
Thus the preceding convergences imply
\[
u_{a^*(f_n)}\to \bar{u}:=u_f-\lambda^*(f)e
\qquad\text{in }C^1(\overline\Omega),
\]
and therefore
\[
\int_\Omega\bar u\,dy
=\lim_{n\to\infty}\int_\Omega u_{a^*(f_n)}\,dy
\ge C.
\]
In particular, $\bar u\not\equiv0$. If $f$ were constant, then $u_f$ would be a constant multiple of $e$ and $\bar u$ would vanish identically. Hence $f\in\mathcal A$.

By definition of $\lambda^*(f)$, $\bar u\ge0$ in $\overline\Omega$, and the minimizing property of $x_0$ gives $\bar u(x_0)=0$. Consequently, Theorem \ref{teo:interior} identifies $\bar u$ with the critical state $u_{a^*(f)}$. Thus
\[
f\in\mathcal A^{\mathrm{pop}}_{\delta,C,M}.
\]

Finally, \eqref{formula1}, \eqref{conv:lambda}, and the convergence of $\int_\Omega f_ne\,dy$ give
\[
a^*(f_n)\longrightarrow a^*(f).
\]
Together with \eqref{eq:conv_a}, this yields
\[
a^*(f)
=
\sup_{g\in\mathcal A^{\mathrm{pop}}_{\delta,C,M}} a^*(g).
\]

\end{proof}

{\color{black}
\begin{remark}
In dimension one with $\Omega = (0,1)$ and $\delta = 1/2$, $M=2$ and $C={1\over 81}$, a direct rearrangement argument proves that the unique maximizer in $\mathcal A_{\frac12,\frac{1}{81}, 2}^{\mathrm{pop}}$ is the bang-bang function 
$$f^* = 2\chi_{(0,1/3)\cup(2/3,1)}\quad\hbox{and}\quad a^*(f_*)=72.$$
Biologically, the bang–bang structure shows that the optimal strategy concentrates resources at their maximum level near the boundary while leaving a resource-depleted central region. Thus, the habitat is divided into a resource-rich peripheral zone and a central zone where the dead-core first appears.

\end{remark}}

\section{Additional comments}
   
\subsection{Non-Local Elliptic Equations with Vanishing Diffusion}

In this section, we deal with the behavior of solutions to the non-local elliptic equation with respect to a diffusion parameter $\epsilon > 0$:
\begin{equation}\label{eq:eps_problem}
\left\{
\begin{array}{lll}
-\epsilon \Delta u + \displaystyle\int_\Omega u(y) \,dy = f & \text{in}& \Omega,\\[1em]
u = 0 & \text{on}& \partial\Omega.
\end{array}
\right.
\end{equation}

Dividing the differential equation in \eqref{eq:eps_problem} by $\epsilon$ yields the equivalent formulation
$$
-\Delta u + \frac{1}{\epsilon} \int_\Omega u(y) \,dy = \frac{f}{\epsilon} \quad \text{in } \Omega.
$$
This equation naturally fits into our standard framework, corresponding to a non-local parameter $a = \frac{1}{\epsilon}$ and a scaled source term $\frac{f}{\epsilon}$. Thanks to the homogeneity of zero order of the critical parameter (see the item $(iii)$ of Remark \ref{obs}), we have:
$$
a^*\left(\frac{f}{\epsilon}\right) = a^*(f).
$$
This scale invariance allows us to define a critical diffusion threshold for the system:
$$
\epsilon^*(f) := \frac{1}{a^*(f)}.
$$

The solution $u_\epsilon \in C^1(\overline{\Omega})\cap H^2(\Omega)\cap H_0^1(\Omega)$ has the explicit representation
$$
u_{\epsilon}(x) = \frac{1}{\epsilon} \left[ u_f(x) - \lambda(\epsilon, f, \Omega) e(x) \right], \quad \forall x \in \overline{\Omega},
$$
where the non-local multiplier $\lambda(\epsilon, f, \Omega)$ is given by:
$$
\lambda(\epsilon, f, \Omega) := \frac{\displaystyle\int_\Omega f(y) e(y) \,dy}{\displaystyle\epsilon + \int_\Omega e(y) \,dy}.
$$

The magnitude of the diffusion rate $\epsilon$ relative to the threshold $\epsilon^*(f)$ determines the sign of $u_\epsilon$:
\begin{itemize}
    \item Case $\epsilon > \epsilon^*(f)$: The local diffusion dominates, ensuring $u_\epsilon > 0$ strictly inside $\Omega$ and $\partial_\eta u_\epsilon < 0$ on $\partial\Omega$.
    \item Case $\epsilon = \epsilon^*(f)$: The solution remains nonnegative ($u_\epsilon \ge 0$ in $\Omega$), but strict positivity or the strict boundary inequality is lost. At least one of the following occurs: the solution vanishes at an interior point $x_0 \in \Omega$, or its normal derivative vanishes at a boundary point $x_1 \in \partial\Omega$.
    \item Case $\epsilon < \epsilon^*(f)$: The diffusion is insufficient to avoid the non-local effects. Consequently, the solution $u_\epsilon$ necessarily changes sign within $\Omega$.
\end{itemize}

From an ecological perspective, $u_\epsilon(x)$ can be interpreted as the steady-state density of a population subject to a spatial dispersal rate $\epsilon$ and global resource competition modeled by the integral term. The threshold $\epsilon^*(f)$ represents the minimum mobility compatible with a nonnegative state. If $\epsilon<\epsilon^*(f)$, the mathematical solution changes sign and therefore no longer represents a physically admissible population density.

\subsection{General nonlocal elliptic equation}
{\color{black}In this subsection, we study the non-local Dirichlet problem
\begin{equation}\label{general_a}
\left\{
\begin{array}{lll}
-\nabla\cdot(c \nabla u) + a\varphi \displaystyle\int_\Omega \psi(y) u(y) \,dy = f & \text{in}& \Omega,\\[1em]
u = 0 & \text{on}& \partial\Omega,
\end{array}
\right.
\end{equation}
where $f, \psi, \varphi \in L^\infty(\Omega)$, with $\varphi, \psi \ge 0$ a.e. in $\Omega$ and $\varphi, \psi \not\equiv 0$. The matrix $c \in L^\infty(\Omega; \mathbb{R}^{N \times N})$ is assumed to be uniformly elliptic, that is, there exists a constant $\gamma > 0$ such that
$$
c(x)\xi \cdot \xi \ge \gamma |\xi|^2 \quad \text{for a.e. } x \in \Omega \text{ and all } \xi \in \mathbb{R}^N.
$$
We define the sets
$$
\mathcal{B}_\varphi=\{f \in L^\infty(\Omega) : f \geq 0 \}
$$
and 
$$\mathcal{A}_\varphi = \mathcal{B}_\varphi \setminus \{r\varphi : r \ge 0\},$$
and we assume that $f\in \mathcal{A}_\varphi$.}

To analyze this non-local problem, we introduce $u_f$ and $e$ as the unique solutions to the corresponding local problems:
\begin{equation*}
\left\{
\begin{array}{lll}
-\nabla\cdot(c \nabla u_f) = f & \text{in}& \Omega,\\[1em]
u_f = 0 & \text{on}& \partial\Omega,
\end{array}
\right.
\quad \text{and} \quad
\left\{
\begin{array}{lll}
-\nabla\cdot(c \nabla e) = \varphi & \text{in}& \Omega,\\[1em]
e = 0 & \text{on}& \partial\Omega.
\end{array}
\right.
\end{equation*} 

By the strong maximum principle, $e > 0$ in $\Omega$, and by Hopf's Lemma, $\partial_\eta e < 0$ on $\partial \Omega$. Consequently, $$\int_\Omega \psi(y) e(y) \,dy > 0,$$ which ensures that $$1 + a \int_\Omega \psi e \,dy \ge 1,\quad \forall a \ge 0.$$ This allows us to represent the unique solution $u_a$   to the non-local problem \eqref{general_a}  explicitly as:
$$
u_a(x) = u_f(x) - \lambda(a, f, \Omega) e(x),
$$
where the parameter $\lambda(a, f, \Omega)$ is defined by:
$$
\lambda(a, f, \Omega) = \frac{\displaystyle a \int_\Omega \psi(y) u_f(y) \,dy}{\displaystyle 1 + a \int_\Omega \psi(y) e(y) \,dy}.
$$

To understand when $u_a$ remains strictly positive inside the domain, we introduce two critical parameters:
$$
\lambda^*(f) := \inf_{x \in \Omega} \frac{u_f(x)}{e(x)}\quad \text{and} \quad \lambda^{\ast\ast}(f) := \inf_{\partial\Omega} \frac{\partial_{\eta} u_f}{\partial_\eta e}.
$$
The quotient $u_f/e$ extends continuously to $\overline\Omega$, with boundary values $\partial_\eta u_f/\partial_\eta e$. If $f\in\mathcal A_\varphi$, the strong maximum principle and the Hopf lemma give $\lambda^*(f)>0$. Moreover, $u_f-\lambda^*(f)e$ is nonnegative and not identically zero, because equality would imply $f=\lambda^*(f)\varphi$.

Because $\psi \geq 0$ and $u_f - \lambda^*(f) e \gneq 0$, the integral 
$$
\int_\Omega \psi(y) \big( u_f(y) - \lambda^*(f) e(y) \big) \,dy
$$ 
is non-negative.
This naturally defines a critical  value $a^*(f)$ for the non-local parameter:
$$
a^*(f) :=
\left\{
\begin{array}{lll}
    \noalign{\smallskip}\displaystyle +\infty, \quad &\text{if}\displaystyle\quad \int_\Omega \psi(y) \big( u_f(y) - \lambda^*(f) e(y) \big) \,dy=0,\\[1em]
    \noalign{\smallskip}\displaystyle\frac{\lambda^*(f)}{\displaystyle \int_\Omega \psi(y) \big( u_f(y) - \lambda^*(f) e(y) \big) \,dy},\quad &\displaystyle\text{if}\quad \int_\Omega \psi(y) \big( u_f(y) - \lambda^*(f) e(y) \big) \,dy>0.
\end{array}
\right.
$$

When $$\int_\Omega \psi(y) \big( u_f(y) - \lambda^*(f) e(y) \big) \,dy > 0,$$ the parameter $a^*(f)$ distinguishes the behavior of the solution $u_a$ into three distinct regimes according to the validity of the maximum principle:
\begin{itemize}
    \item Case $0 \le a < a^*(f)$: here, $\lambda(a, f, \Omega) < \lambda^*(f)$. We can rewrite the solution as $u_a(x) = \big(u_f(x) - \lambda^*(f) e(x)\big) + \big(\lambda^*(f) - \lambda(a, f, \Omega)\big) e(x)$. Both terms are nonnegative, and the second is strictly positive in $\Omega$, ensuring $u_a > 0$ in $\Omega$. On the boundary, the first term has nonpositive outward normal derivative, while the second has strictly negative outward normal derivative. Hence ${\partial_\eta u_a} < 0$ everywhere on $\partial\Omega$.

    \item Case $a = a^*(f)$: here, $\lambda(a^*(f), f, \Omega) = \lambda^*(f)$, meaning $u_{a^*}(x) = u_f(x) - \lambda^*(f) e(x) \ge 0$. The strict positivity of the solution or its normal derivative begins to fail, depending on where the infimum $\lambda^*(f)$ is attained in $\overline{\Omega}$. If attained at an interior point $x_0 \in \Omega$, then $u_{a^*}(x_0) = 0$. If attained on the boundary at $x_0 \in \partial\Omega$, we have $\lambda^*=\lambda^{**}$ and ${\partial_\eta u_{a^*}}(x_0) = 0$, violating the strict boundary inequality of the Hopf Lemma at that point.

      \item Case $a > a^*(f)$: here $\lambda(a, f, \Omega) > \lambda^*(f)$. Assuming $u_f / e \in C(\overline{\Omega})$, let $x_0 \in \overline{\Omega}$ be a point where $\lambda^*(f)$ is attained:
    \begin{itemize}
        \item If $x_0 \in \Omega$, then $u_a(x_0) = e(x_0)\big( \lambda^*(f) - \lambda(a, f, \Omega) \big) < 0$. By continuity, $u_a < 0$ in an open neighborhood of $x_0$ of positive measure.
        \item If $x_0 \in \partial\Omega$, then ${\partial_\eta u_a}(x_0) = {\partial_\eta e}(x_0)\big( \lambda^*(f) - \lambda(a, f, \Omega) \big) > 0$. Since $u_a(x_0) = 0$ and $u_a$ increases strictly toward zero along the outward normal direction, $u_a < 0$ in a neighborhood of $x_0$ for points $x\neq x_0$.
    \end{itemize}
    However, $u_a$ cannot be nonpositive everywhere in $\Omega$. If $u_a \le 0$ in $\Omega$, then $$\int_\Omega \psi(y) u_a(y) \,dy \le 0,$$ which implies $$-\nabla\cdot(c(x) \nabla u_a) = f - a\varphi \int_\Omega \psi u_a \,dy \ge f \ge 0.$$ By the maximum principle, $u_a \ge 0$ in $\Omega$, forcing $u_a \equiv 0$ and thus $f \equiv 0$, which contradicts $f \in \mathcal{A}_\varphi$. Consequently, the solution $u_a$ must change sign in $\Omega$.
\end{itemize}


\end{document}